\def\XXint#1#2#3{{\setbox0=\hbox{$#1{#2#3}{\int}$}
    \vcenter{\hbox{$#2#3$}}\kern-.5\wd0}}
\def\({\left(}
\def\){\right)}
\def\RR{{\mathbb{R}}}
\def\CC{{\mathbb{C}}}
\newcommand{\Om}{\Omega}
\newcommand{\eia}{e^{i\int_{z_{n-1}}^{z_n} A_z(x,y,z)\, dz}}
\newcommand{\eps}{\epsilon}
\newcommand\lep{|\ln \eps|}
\newcommand{\pP}{\psi_{+}}
\newcommand{\pM}{\psi_{-}}
\newcommand{\fP}{f_{+}}
\newcommand{\fM}{f_{-}}
\newcommand{\tta}{\theta}
\newcommand{\nP}{n_{+}}
\newcommand{\nM}{n_{-}}
\newcommand{\dis}{\displaystyle}
\newcommand{\be}{\begin{equation}}
\newcommand{\ee}{\end{equation}}
\newcommand{\bea}{\begin{eqnarray}}
\newcommand{\eea}{\end{eqnarray}}
\newcommand{\beann}{\begin{eqnarray*}}
\newcommand{\eeann}{\end{eqnarray*}}
\newcommand{\nnn}{\nonumber}
 \newtheorem{theorem}{Theorem}[section]
\newtheorem{lemma}[theorem]{Lemma}
\newtheorem{prop}[theorem]{Proposition}
\begin{document}
\title{Symmetric vortices for two-component Ginzburg--Landau systems}
\author{{\Large Stan Alama}\footnote{Dept. of Mathematics and Statistics,
McMaster Univ., Hamilton, Ontario, Canada L8S 4K1.  Supported
by an NSERC Research Grant.} \and  {\Large Qi Gao${}^*$}
 }

\thispagestyle{empty}
\maketitle

\begin{abstract}
We study Ginzburg--Landau equations for a complex vector order parameter
$\Psi=(\psi_+,\psi_-)\in\mathbb C^2$.  We consider symmetric vortex solutions in the plane $\RR^2$, $\psi(x)=f_\pm(r)e^{in_\pm\theta}$, with given degrees $n_\pm\in\mathbb Z$, and prove existence, uniqueness, and asymptotic behavior of solutions as $r\to\infty$.  We also consider the monotonicity properties of solutions, and exhibit parameter ranges in which both vortex profiles $f_+,f_-$ are monotone, as well as parameter regimes where one component is non-monotone.  The qualitative results are obtained by means of a sub- and supersolution construction and a comparison theorem for elliptic systems.

\bigskip

\noindent
{\bf Keywords:} Calculus of variations, elliptic equations and systems, superconductivity, vortices.

\medskip

\noindent
{\bf MSC subject classification:}  35J50, 58J37

\end{abstract}

\newpage


\baselineskip=18pt

\section{Introduction}

In this paper we study entire solutions in $\RR^2$ of coupled systems of two Ginzburg-Landau type equations, for complex vector-valued functions 
$\Psi=(\psi_+(x),\psi_-(x)): \ \RR^2\to \mathbb C^2$,
\begin{equation}\label{eqns}
\left.
\begin{gathered}
-\Delta\psi_{+}+[A_{+}(|\psi_{+}|^{2}-t^2_{+})+B(|\psi_{-}|^{2}-t^2_{-})]\psi_{+}=0,\\ 
-\Delta\psi_{-}+[A_{-}(|\psi_{-}|^{2}-t^2_{-})+B(|\psi_{+}|^{2}-t^2_{+})]\psi_{-}=0.
\end{gathered}
\right\}
\end{equation}
Coupled systems of two Ginzburg-Landau equations of the type studied here arise both in the modeling of $p$-wave superconductors \cite{KnRo} and in two-component Bose-Einstein condensates (BEC) \cite{Eto}.  We give a brief discussion of the connection between the energy \eqref{energy1} and the two component BEC model at the end of this section.

Throughout the paper we make the following assumptions concerning the constants appearing in \eqref{eqns}:
\begin{equation*}  
A_+, A_->0, \ B^2<A_+A_-, \quad t_+, t_->0.
 \tag{\text{H} } 
 \end{equation*}
 
The system \eqref{eqns} gives the Euler-Lagrange equations formally associated to the energy 
 \begin{equation}\label{energy1}
E(\Psi; \Omega)=\dis\int_{\Omega}\frac{1}{2}|\nabla\Psi|^{2}
+\frac{1}{4}[A_{+}(|\psi_{+}|^{2}-t^2_{+})^{2}+A_{-}(|\psi_{-}|^{2}-t^2_{-})^{2}+2B(|\psi_{+}|^{2}-t^2_{+})(|\psi_{-}|^{2}-t^2_{-})]
\end{equation}
for functions $\Psi=(\psi_+,\psi_-)\in H^1(\Omega;\mathbb C^2)$.  As is the case for any Ginzburg-Landau model, this energy is divergent for $\Omega=\RR^2$ for nontrivial entire solutions.  Nevertheless, the problem is variational in nature, and the functional $E(\Psi;\Omega)$ will be a useful tool for studying solutions locally.  

By hypothesis (H), the potential term in the energy
$$  F(\Psi) = A_{+}(|\psi_{+}|^{2}-t^2_{+})^{2}+A_{-}(|\psi_{-}|^{2}-t^2_{-})^{2}+2B(|\psi_{+}|^{2}-t^2_{+})(|\psi_{-}|^{2}-t^2_{-})  $$ 
is strictly positive definite, and attains its minimum (of zero) when $|\psi_\pm|=t_\pm$.  Following previous work on the classical Ginzburg-Landau equations (see \cite{BMR}), we seek solutions for which the potential energy is integrable,
$$  \int_{\RR^2} F(\Psi)\, dx <\infty.  $$
By the positive definiteness of $F$, this suggests that the solutions we seek should attain the asymptotic values 
$$  |\psi_\pm(x)|\to t_\pm \quad\text{as\ $|x|\to \infty$.}  $$
That is, for $|x|$ large, the solutions may be written in polar form, 
$\psi_\pm(x)=\rho_\pm(x) e^{i\phi_\pm(x)}$ with real-valued $\rho_\pm(x),\phi_\pm(x)$, and $\rho_\pm(x)\simeq t_\pm$.  The phases $\phi_\pm(x)$ may have nontrivial winding number around any large circle $C_R$ enclosing the origin, and thus these solutions carry two integer {\em degrees}  $n_\pm=\deg({\psi_\pm\over |\psi_\pm|}, C_R)\in\mathbb Z$.  
As there are no smooth $\Sigma$-valued functions with nontrivial degrees in a simply connected domain, solutions $\Psi(x)$ with nontrivial winding must vanish in one or more of its components $\psi_\pm$ to avoid singularities.  These zeros are the {\em vortices} of the solution.  

In this paper we consider special solutions of the equations \eqref{eqns} with vortices, which are equivariant with assigned degree pair $[n_+,n_-]\in\mathbb Z^2$,
$$
\pP(x)=\fP(r)e^{i\nP\tta},\ \ \pM(x)=\fM(r)e^{i\nM\tta}\ ,
$$
in polar coordinate $(r, \tta)$. By taking complex conjugates if necessary, we may assume that ${n_\pm}\ge0$. By the equivariant ansatz, the system \eqref{eqns} reduces to a system of ODEs:
\be\label{radeq}
\left.
\begin{gathered}
 \dis -f''_{+}-\frac{1}{r}f'_{+}+\frac{n^2_{+}}{r^2}f_{+}+\left[ A_{+}(f^2_{+}-t^2_{+})+B(f^2_{-}-t^2_{-})\right]f_{+}=0, \ \ {\text{for}}\ r\in (0,\ \infty), \\
 \dis -f''_{-}-\frac{1}{r}f'_{-}+\frac{n^2_{-}}{r^2}f_{-}+\left[ A_{-}(f^2_{-}-t^2_{-})+B(f^2_{+}-t^2_{+})\right]f_{-}=0, \ \ {\text{for}}\ r\in (0,\ \infty), \\
f_{\pm}(r)\ge 0 \ \text{for\ \ all}\ \ r\in [0,\ \infty), \\
 \dis f_{\pm}(r)\to t_{\pm}\ \ \text{as}\ \ r\to\infty, \\
f_{\pm}(0)=0 \ \ \text{if}\ \ n_{\pm}\neq0; \ \ \ \ f'_{\pm}(0)=0 \ \ \text{if}\ \ n_{\pm}=0.
\end{gathered}
\right\}
\ee

The first result is the existence of unique equivariant solutions, for any given degree pair $[n_+,n_-]\in\mathbb Z^2$:

\begin{prop}\label{lemunique}
Let $n_\pm\in\mathbf{ Z}$ be given and $A_{+}A_{-}-B^{2}>0$. Then there exists a unique solution $[\fP(r),\ \fM(r)]$ to \eqref{radeq} for $r\in [0,\ \infty)$.  Moreover,
\begin{gather}
f_{\pm}\in C^{\infty}\left((0,\ \infty)\right),\\
f_{\pm}(r)>0\ \text{ for all}\ r>0,\label{fpstive}\\ 
f_{\pm}(r)\sim r^{|n_\pm|}\ \text{for}\ r\sim0, \label{fnear0} \\
\int_0^\infty (f'_\pm)^2\, r\, dr <\infty, \label{gradint}\\
\int_0^\infty \left[ A_+ (f_+^2-t_+^2)^2 + 2B(f_+^2-t_+^2)(f_-^2-t_-^2) + A_- (f_-^2 - t_-^2)^2\right] r\, dr = n_+^2 t_+^2 + n_-^2 t_-^2.
\label{BMR1}
\end{gather} 
In particular, $\Psi(x)=[\fP(r)e^{i\nP\tta},\ \fM(r)e^{i\nM\tta}]$ is an entire solution of \eqref{eqns} in $\mathbb{R}^2$.
\end{prop}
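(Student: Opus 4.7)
The plan is to build existence by minimizing the energy on bounded balls $B_R$ and passing to the limit $R\to\infty$, then extract the qualitative properties from the ODE structure together with (H).

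\textbf{Existence.}  I would minimize $E(\,\cdot\,;B_R)$ over equivariant configurations $\Psi=(f_+e^{in_+\tta},f_-e^{in_-\tta})$ with Dirichlet data $f_\pm(R)=t_\pm$ (and $f_\pm(0)=0$ when $n_\pm\neq0$).  By (H), $F(\Psi)$ is a strictly positive definite quadratic form in $(|\psi_+|^2-t_+^2,|\psi_-|^2-t_-^2)$, which gives coercivity in the natural weighted space, and an $L^\infty$ bound uniform in $R$ follows from testing the equations against $(|\psi_\pm|^2-M^2)_+\psi_\pm$ for $M$ large, where (H) lets the diagonal potential absorb the cross-coupling.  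Standard elliptic regularity then supplies $C^2_{\mathrm{loc}}$ bounds independent of $R$; a diagonal extraction produces a solution on $[0,\infty)$.  The asymptote $f_\pm\to t_\pm$ is recovered from the fact that linearization at $(t_+,t_-)$ has mass matrix $\bigl(\begin{smallmatrix}A_+t_+^2&Bt_+t_-\\ Bt_+t_-&A_-t_-^2\end{smallmatrix}\bigr)$ which is strictly positive definite under (H), so perturbations decay exponentially.

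\textbf{Smoothness, positivity, behavior at $r=0$.}  Elliptic bootstrap on $(0,\infty)$ gives $C^\infty$ smoothness.  For \eqref{fpstive}, if $f_+(r_0)=0$ at some $r_0>0$ then $f_+\ge0$ forces $f_+'(r_0)=0$; rewriting the radial equation with the zeroth-order coefficient $c(r)=\tfrac{n_+^2}{r^2}+A_+(f_+^2-t_+^2)+B(f_-^2-t_-^2)$, uniqueness for the resulting linear ODE yields $f_+\equiv0$, contradicting $f_+(\infty)=t_+>0$.  For \eqref{fnear0}, the radial equation has a regular singular point at $r=0$ with indicial roots $\pm|n_\pm|$; the bounded branch is $r^{|n_\pm|}$.

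\textbf{Uniqueness.}  Given two solutions $(f_\pm)$ and $(g_\pm)$, subtract the radial equations and test the difference against a suitable multiple of $(f_\pm-g_\pm)\,r$.  After integration by parts the linear terms are manifestly nonnegative; the difference of the nonlinear terms produces an expression governed by the matrix $\bigl(\begin{smallmatrix}A_+&B\\B&A_-\end{smallmatrix}\bigr)$, which is positive definite by (H), and a short algebraic rearrangement shows that the aggregate quadratic form on $(f_+-g_+,f_--g_-)$ is nonnegative, forcing the difference to vanish identically.  I expect this to be the main obstacle: choosing the algebraic form of the test function so that the cross-coupling contributions are dominated by the diagonal coercivity, and controlling the boundary contributions at $0$ and $\infty$ via \eqref{fnear0} and the exponential convergence to $t_\pm$.

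\textbf{Integral identities.}  For \eqref{gradint}, the exponential decay of $f_\pm-t_\pm$ at infinity makes $(f_\pm')^2\,r$ integrable at $\infty$, while near $r=0$ the estimate $f_\pm\sim r^{|n_\pm|}$ yields $(f_\pm')^2 r\sim r^{2|n_\pm|-1}$, which is integrable.  For \eqref{BMR1}, the natural tool is the Pohozaev identity: multiply the PDE $-\Delta\Psi+\nabla_\Psi W(\Psi)=0$ (with $W=\tfrac14 F$) by $x\cdot\nabla\Psi$, integrate over $B_R$, and send $R\to\infty$.  In two space dimensions the bulk gradient term $\tfrac{d-2}{2}\int|\nabla\Psi|^2$ vanishes; under the equivariant ansatz the radial derivatives on $\partial B_R$ decay exponentially, and the only boundary contribution that survives in the limit is the angular part $\pi(n_+^2t_+^2+n_-^2t_-^2)$ coming from $\tfrac{R}{2}\int_{\partial B_R}|\nabla\Psi|^2$.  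Converting to polar coordinates and using the factor $\tfrac14$ in $W$ yields exactly \eqref{BMR1}.
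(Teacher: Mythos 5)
Your outline matches the paper's broad strategy (minimization on $B_R$, uniform $L^\infty$ bound, diagonal extraction, Frobenius analysis at $r=0$, Pohozaev identity for the quantization), but two of its load-bearing steps are either wrong or missing.

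First, the passage to the limit at infinity. You claim the asymptote $f_\pm\to t_\pm$ is ``recovered'' because the linearization at $(t_+,t_-)$ is positive definite, ``so perturbations decay exponentially.'' This is circular: the diagonal limit of the ball solutions converges only in $\mathcal{C}^k_{\mathrm{loc}}$, so the boundary data $f_{R,\pm}(R)=t_\pm$ is lost and you have no a priori information that the limit ever enters a neighborhood of $(t_+,t_-)$; linearization arguments cannot start. Worse, the claimed exponential decay is false for this problem: because of the $n_\pm^2/r^2$ forcing, the true behavior (Theorem~\ref{asymptotics}) is $f_\pm = t_\pm + a_\pm r^{-2}+O(r^{-4})$ with $a_\pm\neq 0$ in general, i.e.\ algebraic decay. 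This error propagates into your treatment of \eqref{gradint} and of the boundary term in the Pohozaev identity, both of which you justify by exponential decay — and in any case those asymptotics are proved \emph{after} and \emph{using} Proposition~\ref{lemunique}, so they are not available here. The paper's actual route is: the Pohozaev identity \eqref{Pohozaev} on $[0,R]$ gives the uniform bound \eqref{intcond} on $\int (f_\pm^2-t_\pm^2)^2\,r\,dr$; a cutoff test function $r(t_\pm-f_\pm)\eta_R$ then yields \eqref{gradint}; and finally a Cauchy-in-$r$ argument combining \eqref{intcond} and \eqref{gradint} forces $f_\pm\to t_\pm$. For \eqref{BMR1}, \eqref{gradint} supplies a sequence $R_n$ with $R_nf'_\pm(R_n)\to 0$, which is all that is needed to kill the boundary term — no pointwise decay rate is required.

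Second, uniqueness. Testing the \emph{difference} of the two equations against a multiple of $(f_\pm-g_\pm)r$ does not close: the difference of the cubic terms, e.g.\ $A_+[(f_+^2-t_+^2)f_+-(g_+^2-t_+^2)g_+]$, is not controlled in sign by the positive definiteness of $\bigl(\begin{smallmatrix}A_+&B\\ B&A_-\end{smallmatrix}\bigr)$ alone, since $f_\pm^2-t_\pm^2$ changes sign. You correctly identify this as ``the main obstacle'' but do not resolve it. The resolution in the paper is the Brezis--Oswald device: divide each equation by its own unknown ($f_\pm$ resp.\ $g_\pm$), test against $f_\pm^2-g_\pm^2$, and exploit the identity
\[
\int_0^\infty\Bigl[-\tfrac{\Delta_r f}{f}+\tfrac{\Delta_r g}{g}\Bigr](f^2-g^2)\,r\,dr
=\int_0^\infty\Bigl(\bigl|f'-\tfrac{f}{g}g'\bigr|^2+\bigl|g'-\tfrac{g}{f}f'\bigr|^2\Bigr)r\,dr\ge 0,
\]
which isolates exactly the quadratic form $A_+X^2+2BXY+A_-Y^2$ in $X=f_+^2-g_+^2$, $Y=f_-^2-g_-^2$ on the other side with a negative sign; positive definiteness then forces $f_\pm\equiv g_\pm$. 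Without this (or the alternative convexity argument via Theorem~4.1 of \cite{ABM1}), the uniqueness claim is unsupported.
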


We note that conclusion \eqref{BMR1} gives a quantization result (see \cite{BMR}) for the solutions in terms of the degree pair at infinity,
\begin{multline}\label{intgcond}
\dis\int_{\mathbf{ R}^2}\left[ A_{+}(|\psi_{+}|^{2}-t^2_{+})^{2}+A_{-}(|\psi_{-}|^{2}-t^2_{-})^{2}+2B(|\psi_{+}|^{2}-t^2_{+})(|\psi_{-}|^{2}-t^2_{-})\right]dx  \\
=2\pi \left(n_+^2 t_+^2 + n_-^2t_-^2\right).
\end{multline}

To determine the shape of the vortex profiles, we first consider the asymptotic form of the solutions for $r\to\infty$.  We prove that
\begin{equation}\label{as_intro}
f_{\pm}(r)=\dis t_{\pm}+\frac{a_{\pm}}{r^2}+\frac{b_{\pm}}{r^4}+O(r^{-6}), \quad
f'_{\pm}(r)=-\frac{2a_{\pm}}{r^3} + O(r^{-5}),
\quad \text{as}\ \ r\to\infty,
\end{equation}
with 
$$
a_{\pm}=\dis \frac{1}{2}\frac{Bn_\mp^2-A_{\mp}n_\pm^2}{(A_{+}A_{-}-B^{2})t_{\pm}},
$$
and (rather complicated) constants $b_\pm$ given in \eqref{bpm}. 
A formal asymptotic expansion of this form (in fact, an expansion to arbitrary order in $1/r$) may be obtained by simply substituting an ansatz into the system of equations and matching terms.  Our results, presented in Theorem~\ref{asymptotics}, provide rigorous confirmation of this expansion by means of a sub- and super-solution construction.   Moreover, we prove that the expansion is \underbar{uniform} in the coefficients $A_\pm, B, t_\pm$ lying in a compact set. The proof is completed using a new and original comparison principle (Lemma~\ref{generalcomplem}) for elliptic systems, which generalizes the one in \cite{AB}. While there is no comparison or maximum principle which applies to a generic elliptic system, in this case the equations have a special structure (similar to competitive or cooperative systems) which fortunately admits a comparison result.

From the asymptotics \eqref{as_intro}, we see that the shape of the solutions depends strongly on the coefficients, in particular the sign of the interaction coefficient $B$.  For $B<0$, both components approach their limiting value $t_\pm$ from below, as is familiar from the classical GL vortices.  However, for $B>0$, this may no longer be the case, and for certain choices of $n_\pm$ and $B$ one of the components will approach its limiting value from above.  Such behavior was already noted in \cite{ABM1} in the case $n_-=0$, for a ``balanced'' system, $A_+=A_-$, $t_+=t_-$.  
In case $n_\pm\neq 0$, our result implies that there are parameter regimes in which vortex profiles will be non-monotone.

\medskip

Finally, we consider the question of monotonicity of the solution profiles.  For the standard GL vortices, the vortex profile is known to be strictly monotone increasing in $r$.  As suggested by the asymptotic expansion above, the validity of this property is strongly dependent on the value of $B$.  We prove the following:

\begin{theorem}\label{monotone}
Let $A_+, A_->0$ be fixed, and $B$ such that $B^2<A_+A_-$.
Assume $\Psi(x;B)=[f_{+}(r;B)e^{in_{+}\tta},f_{-}(r;B)e^{in_{-}\tta}]$ is the equivariant solution for those parameters $A_\pm,B$.
\begin{enumerate}[{\bf (i)}]
\item If $B<0$, then $f'_{\pm}(r;B)\ge 0$ for all $r>0$ and for any degree $[n_{+},n_{-}]$.

\item If $B>0$, $n_{+}\ge1$ and $n_{-}=0$, then $f'_{+}(r;B)\ge0$ and $f'_{-}(r;B)\le0$ for all $r>0$.

\item  For any pair $[n_+,n_-]$ with $n_+\neq 0\neq n_-$, there exists $B_0>0$ such that $f'_\pm(r;B)\ge 0$ for all $r>0$ and all $B$ with $0\le B\le B_0$.
\end{enumerate}
\end{theorem}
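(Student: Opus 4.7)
The proof has three parts with related but distinct strategies.  The plan for (i) and (ii) is to first establish one-sided bounds on $f_\pm$ relative to $t_\pm$ using the system comparison result (Lemma~\ref{generalcomplem}), and then to deduce the sign of $f_\pm'$ by applying a similar comparison argument to the linear system obtained by differentiating the radial equations \eqref{radeq}.  Part (iii) is a perturbative argument that starts from the decoupled case $B=0$.

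For (i), with $B<0$, the constant pair $(t_+,t_-)$ is a super-solution of \eqref{radeq}, since the potential brackets vanish and the terms $n_\pm^2 t_\pm/r^2$ are non-negative.  With $B<0$ the system has the cooperative-type structure required by Lemma~\ref{generalcomplem}, and comparison yields $f_\pm\le t_\pm$ on $(0,\infty)$.  Differentiating each radial equation, the derivatives $g_\pm=f_\pm'$ satisfy the coupled linear system
\begin{equation*}
-g_\pm'' - \tfrac{1}{r}\, g_\pm' + \tfrac{1+n_\pm^2}{r^2}\, g_\pm + \bigl[A_\pm(3f_\pm^2 - t_\pm^2) + B(f_\mp^2 - t_\mp^2)\bigr]\, g_\pm + 2Bf_+f_-\, g_\mp = \tfrac{2n_\pm^2}{r^3}\, f_\pm \ge 0.
\end{equation*}
Since $B<0$ the off-diagonal coefficient $2Bf_+f_-$ is non-positive, so the system for $(g_+,g_-)$ has exactly the sign structure to which Lemma~\ref{generalcomplem} applies; combined with the boundary data $g_\pm(0)\ge 0$ extracted from \eqref{fnear0} and $g_\pm(r)\to 0$ from the expansion \eqref{as_intro}, this yields $g_\pm\ge 0$ as required.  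Part (ii) follows the same template, but with $B>0$ and $n_-=0$ the correct one-sided bounds are $f_+\le t_+$ and $f_-\ge t_-$, consistent with the signs $a_+<0$ and $a_- = Bn_+^2/[2(A_+A_--B^2)t_-]>0$ given by \eqref{as_intro}.  Passing to the variables $(g_+,-g_-)$ restores a non-positive off-diagonal coupling, and the same scheme applies with boundary data $g_+(0)\ge 0$, $g_-(0)=0$ (since $n_-=0$), and $g_\pm(\infty)=0$.

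For (iii), when $B=0$ the system decouples and each $f_\pm(r;0)$ reduces to the classical scalar Ginzburg--Landau vortex profile of degree $n_\pm\ne 0$, which is known to be strictly increasing with $f_\pm'(r;0)>0$ for $r>0$.  The plan is a three-zone perturbation argument.  Near $r=0$, the uniform version of \eqref{fnear0} gives $f_\pm\sim r^{n_\pm}$ uniformly for $B$ small, so $f_\pm'>0$ on $(0,r_0]$ for some $r_0$ independent of $B$.  Near $r=\infty$, the uniform expansion of Theorem~\ref{asymptotics} gives $f_\pm'(r;B)\sim -2a_\pm(B)/r^3$ with $a_\pm(B)\to -n_\pm^2/(2A_\pm t_\pm)<0$ as $B\to 0$, so $f_\pm'>0$ on $[R_0,\infty)$ for some $R_0$ uniform in $B$ small.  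On the intermediate compact interval $[r_0,R_0]$, continuous dependence of the solutions on $B$ in $C^1$ together with the strict positivity $f_\pm'(r;0)\ge c>0$ forces $f_\pm'(r;B)\ge c/2$ for $B$ small enough, and taking $B_0$ to be the minimum of the three thresholds completes the argument.

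The main obstacle lies in the second step of (i) and (ii): the cooperative sign of the off-diagonal coupling $2Bf_+f_-$ in the linear system for $(g_+,g_-)$ becomes useful only once the one-sided bounds on $f_\pm$ are secured, and careful attention must be paid to the limits at $r=0$ (especially in (ii), where $f_-(0)>0$ is not known explicitly) and at $r=\infty$ in order to legitimately apply a system maximum principle of the form of Lemma~\ref{generalcomplem} on the unbounded interval $(0,\infty)$.  In (iii) the corresponding subtlety is securing uniformity in $B$ of both the small-$r$ and large-$r$ asymptotics, which is exactly the content of the uniform version of Theorem~\ref{asymptotics}.
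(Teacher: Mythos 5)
Your part (iii) is sound and is essentially the paper's own argument: both rest on the uniform large-$r$ estimate of Theorem~\ref{asymptotics}, the $\mathcal{C}^k$ convergence $f_\pm(\cdot;B)\to f_{0,\pm}$ as $B\to 0$, and the strict monotonicity of the decoupled profiles. The only difference is in the core zone, which you control by a uniform Taylor expansion at $r=0$ while the paper uses Rolle's theorem along a contradicting sequence together with $f^{(n_\pm)}_{0,\pm}(0)>0$; the uniformity you flag as the main subtlety is exactly what the paper's compactness argument supplies.

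Parts (i) and (ii), however, contain a genuine gap. Lemma~\ref{generalcomplem} requires the diagonal coefficients to be positive and $4\mathbb{AD}-(\mathbb{B}+\mathbb{C})^2>0$ to hold \emph{throughout} the interval $[R,\infty)$ on which it is invoked, with the signs of $u,v$ known at the left endpoint $R$. For your differentiated system the diagonal coefficient is $A_\pm(3f_\pm^2-t_\pm^2)+B(f_\mp^2-t_\mp^2)$, which is negative in the vortex core: for $n_\pm\neq 0$ one has $f_\pm(r)\to 0$ as $r\to 0$ by \eqref{fnear0}, so this coefficient tends to a quantity bounded above by $-A_\pm t_\pm^2+O(|B|)<0$, and the one-sided bounds on $f_\pm$ do not repair this, since $A_\pm(3f_\pm^2-t_\pm^2)$ is negative wherever $f_\pm<t_\pm/\sqrt3$. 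Consequently the comparison lemma can only be applied on $[R,\infty)$ with $R$ bounded away from the core, and there the required boundary datum is precisely $f'_\pm(R)\ge 0$ --- the statement to be proved. (Your first step has the same defect: for the pair $u=f_+-t_+$, $v=f_--t_-$ one gets $\mathbb{A}=A_+(f_++t_+)f_+\to 0$ as $r\to 0$, so the positive-definiteness hypothesis fails near the origin and the constant super-solution comparison cannot be run down to $r=0$ either.) The paper circumvents exactly this obstruction by replacing the pointwise maximum principle with a global, \emph{integrated} positivity statement: the second variation $D^{2}E_{n_{+},n_{-}}(f_{+},f_{-})$ is strictly positive on $X_{n_+}\times X_{n_-}$, and testing the differentiated equation against $v_\pm=\min\{0,f'_\pm\}$ (respectively against $[v_+,w_-]$ in case (ii)) yields $0\le D^{2}E_{n_{+},n_{-}}(f_{+},f_{-})[v_+,v_-]<0$ unless $v_\pm\equiv 0$. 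To keep a comparison-principle flavour you would need a version of Lemma~\ref{generalcomplem} valid for sign-changing potentials, which is not available; as written, your argument for (i) and (ii) does not close.
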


As it is expected that only degrees $n_\pm=0,\pm 1$ give rise to locally minimizing solutions (see \cite{Sh}, \cite{ABM2}), let us summarize what we know about these specific symmetric vortex solutions.  The situation for the $[n_+,n_-]=[1,0]$ vortex is very clear:  when $B\le 0$ both components are monotone increasing, but when $B>0$, $f_-$ {\it decreases} monotonically, while $f_+$ increases.  This is the same result as was found in \cite{ABM1} in the special case $A_+=A_-$, $t_+=t_-$.

For the $[n_+,n_-]=[1,1]$ vortex the picture is more complicated.
For the interaction coefficient $B< 0$, both components $f_\pm(r)$ are monotone increasing to their limiting values $t_\pm$ as $r\to\infty$, and so they always resemble classical GL vortices.  When $B=0$, the equations decouple, and each component is a rescaled GL vortex profile, and hence monotone increasing as well.  For $B>0$, there are two possibilities.  For small values of $B>0$, both components continue to be monotone increasing in $r$.  But (for instance) when $A_+<B<A_-$, the component $f_+(r)$ has $f_+(0)=0$ but it approaches its limiting value {\it from above}, and hence cannot be either increasing or decreasing in $r$.  This is a phenomenon which is not familiar in the GL theory, although non-monotone vortex profiles have been found in the context of confined BEC (see \cite{KoP}).

Parts (i) and (ii) of Theorem~\ref{monotone} are proven using the second variation of energy around an equivariant solution. Part (iii) is obtained via a compactness argument based on the uniform asymptotic estimate from Theorem~\ref{asymptotics}.

\bigskip

The questions addressed in this paper are motivated by analogous results  for the classical (scalar complex) Ginzburg-Landau equations,
\begin{equation}\label{GL}  -\Delta u + (|u|^2-1)u=0.  
\end{equation}
  The existence, uniqueness and strict monotonicity of equivariant solutions to \eqref{GL} were proven in \cite{HH}, and the asymptotic expansion of the radial solutions as $r\to\infty$ was derived in \cite{CEQ}.  Results for a system of two Ginzburg-Landau equations \eqref{eqns} have been recently been obtained in \cite{AB}, \cite{ABM1}, \cite{ABM2} in the special case of ``balanced'' coefficients, $A_+=A_-$ and $t_+=t_-=1/\sqrt{2}$.  For the balanced case, existence, uniqueness and monotonicity of radial solutions are proven in \cite{ABM1}, and questions of minimality, stability and bifurcation are treated in \cite{ABM2}.  While several of the methods employed in these papers may be generalized to the general case (with hypotheses (H)), the balanced coefficient case provides several simplifications which are not valid in the general case.  For instance, in the balanced case with $n_+=n_-$, the unique equivariant solution to \eqref{eqns} takes the simple form $\Psi=\left({u\over \sqrt{2}},{u\over \sqrt{2}}\right)$ where $u$ solves the classical (complex scalar) Ginzburg-Landau equation \eqref{GL}, and thus the full treatment of the system of equations \eqref{eqns} may be reduced to dealing with a single equation.  As discussed above, the general case provides more variety of possible behaviors for symmetric vortex solutions, and requires enhanced methods which apply to systems of equations.  

A forthcoming paper will consider the stability and minimality of the symmetric vortex solutions.

\subsection*{Two-component Bose-Einstein condensates}

In a model of a two-compenent BEC from Eto {\it et al.} \cite{Eto}, we consider a pair of complex wave functions $\Phi=(\varphi_1,\varphi_2)\in\CC^2$, defined in the sample domain $\Omega\subset\RR^2$.  The energy of the configuration is defined as
$$  E(\varphi_1, \varphi_2) = 
  \int_\Omega\left[ {\hbar^2\over 2m_1}|\nabla\varphi_1|^2
     + {\hbar^2\over 2m_2}|\nabla\varphi_2|^2
      + \frac12\left( g_1|\varphi_1|^4 + g_2|\varphi_2|^4
            + 2g_{12}|\varphi_1|^2|\varphi_2|^2\right)\right] dx,
$$
where $m_1,m_2>0$ are the masses, and the coupling constants satisfy the positivity condition $g_1g_2-g_{12}^2>0$.  The Gross-Pitaevskii equations govern the dynamics of the condensate,
\begin{equation}\label{GP} \left.
\begin{aligned}
i\hbar\partial_t\varphi_1 &= -{\hbar^2\over 2m_1}\Delta\varphi_1
       + g_1|\varphi_1|^2\varphi_1 + g_{12}|\varphi_2|^2\varphi_1, \\
i\hbar\partial_t\varphi_2 &= -{\hbar^2\over 2m_2}\Delta\varphi_2
       + g_{12}|\varphi_1|^2\varphi_2 + g_{2}|\varphi_2|^2\varphi_2.
\end{aligned}
\right\}
\end{equation}
A stationary equation of the desired form is obtained by considering standing wave solutions, $\varphi_i(x,t)= e^{-i\mu_i t/\hbar}u_i(x)$, $i=1,2$, where $\mu_i$ represent the chemical potentials:
\begin{align*}
 -{\hbar^2\over 2m_1}\Delta\varphi_1
       + g_1|\varphi_1|^2\varphi_1 + g_{12}|\varphi_2|^2\varphi_1
       &= \mu_1\varphi_1, \\
 -{\hbar^2\over 2m_2}\Delta\varphi_2
       + g_{12}|\varphi_1|^2\varphi_2 + g_{2}|\varphi_2|^2\varphi_2
        &=  \mu_2\varphi_2.
\end{align*}
In the variational formulation of the stationary problem, the chemical potentials represent Lagrange multipliers, which arise because of the constraints on the masses of the two condensate species, 
$$  \int_\Omega |\varphi_i|^2 dx = \int_\Omega |u_i|^2 dx = N_i, \quad i=1,2.  $$
By a rescaling of the dependent variables, $\psi_+=\sqrt[4]{{m_2\over m_1}}\,u_1$, $\psi_-=\sqrt[4]{{m_1\over m_2}}\,u_2$, we may eliminate the masses $m_i$ from the equations, and we obtain the system 
\begin{align*}
-\eps^2\Delta\psi_{+}+[A_{+}(|\psi_{+}|^{2}-t^2_{+})+B(|\psi_{-}|^{2}-t^2_{-})]\psi_{+}&=0,\\ 
-\eps^2\Delta\psi_{-}+[A_{-}(|\psi_{-}|^{2}-t^2_{-})+B(|\psi_{+}|^{2}-t^2_{+})]\psi_{-}&=0,
\end{align*}
with $\eps^2={\hbar^2\over \sqrt{m_1m_2}}$, $A_+={m_1\over m_2}g_1$, $A_-={m_2\over m_1}g_2$, $B=g_{12}$, and 
$$ t_+^2= {\mu_1 g_2-\mu_2 g_{12}\over g_1g_2-g_{12}^2}     \sqrt{{m_2\over m_1}}, \qquad
     t_-^2= 
     {\mu_2 g_1-\mu_1 g_{12}\over g_1g_2-g_{12}^2}\sqrt{{m_1\over m_2}} .
$$
As is typical for GL equations, by blowing up around the core of a vortex at scale $\eps$, we obtain an entire solution of \eqref{eqns} in all of $\RR^2$. 

In a more physically appropriate model for a two-component BEC \cite{KTU}, the Laplacian in the Gross-Pitaevskii system \eqref{GP} should be replaced by the Hamiltonians 
$$ H_i:= -{\hbar^2\over 2m_i} \Delta + V_i^{\rm trap} - \omega L_z, 
\qquad i=1,2,  $$
with harmonic  trapping potentials $V_i^{\rm trap}=c_i^2 |x|^2/2$, $c_i$ constant $i=1,2$; angular momentum operator $L_z$; and (constant) angular speed $\Omega$.  While this is an essential step, both in modeling the confinement of the condensate and in describing the onset of vortices in the sample, these terms will not affect the general form \eqref{eqns} of the blow-up equations which describe the vortex profiles at length scale $\eps$ in the condensate.  Indeed, the momentum operator plays much the same role as the magnetic vector potential in the GL model of superconductivity, and for rotations which are of moderate strength in $\eps$, $\omega\ll \eps^{-1}$, the analysis of Proposition~3.12 of \cite{SS} may be used to derive \eqref{eqns} in limit $\eps\to 0$ after rescaling.

\section{Existence and uniqueness}

We begin with an {\it a priori} bound on solutions of the system \eqref{eqns} in bounded domains.  Let $\lambda_{s}>0$ denote the smallest eigenvalue of the matrix $\left[\begin{array}{cc}A_{+} & B \\B & A_{-}\end{array}\right]$.

\begin{prop}\label{uniformbound}
Let $\Omega\subset\RR^2$ be a bounded smooth domain, and 
$\Psi=(\psi_+,\psi_-)$ be a solution of \eqref{eqns} in $\Omega$, satisfying
\begin{equation}\label{BC}
  |\Psi(x)|^2 = |\psi_+(x)|^2 + |\psi_-(x)|^2 \le t_+^2 + t_-^2 \quad
\text{for all $x\in\partial\Omega$}.
\end{equation}
Then $|\Psi(x)|^{2}\le\min\{{2M\over \lambda_{s}}, t^2_{+}+t^2_{-} \}$ for all $x\in\Omega$, where $M=\max\{A_{+}t^2_{+}+Bt^2_{-}, A_{-}t^2_{-}+Bt^2_{+}\}$.
 \end{prop}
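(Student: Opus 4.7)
The plan is to apply the weak maximum principle to the scalar $v := |\Psi|^2 = |\psi_+|^2+|\psi_-|^2$, exploiting the fact that the nonlinear terms in \eqref{eqns} assemble into a quadratic form whose matrix is precisely $\mathcal{A}=\bigl(\begin{smallmatrix}A_+ & B\\ B & A_-\end{smallmatrix}\bigr)$, which by hypothesis (H) is positive definite with smallest eigenvalue $\lambda_s>0$.

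First I derive a pointwise identity for $-\Delta v$. Multiplying the $\psi_+$-equation by $\bar\psi_+$, taking real parts, and using $\Re(\bar\psi_+\Delta\psi_+)=\tfrac12\Delta|\psi_+|^2-|\nabla\psi_+|^2$, doing the same for the $\psi_-$-equation, and summing, I obtain (with $a:=|\psi_+|^2$ and $c:=|\psi_-|^2$) the pointwise identity
\[
-\Delta v + 2|\nabla\Psi|^2 \;=\; -2\bigl[Q(a,c) - \alpha a - \beta c\bigr],
\]
where $Q(a,c)=A_+a^2+2Bac+A_-c^2$ is the quadratic form associated to $\mathcal{A}$, $\alpha := A_+t_+^2+Bt_-^2$, $\beta := A_-t_-^2+Bt_+^2$, so that $M=\max\{\alpha,\beta\}$.

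Next, suppose $v$ attains its maximum over $\overline\Omega$ at an interior point $x_0\in\Omega$. By the weak maximum principle $-\Delta v(x_0)\geq 0$, so the identity forces $Q(a,c)(x_0)\leq\alpha a(x_0)+\beta c(x_0)\leq M\,v(x_0)$. On the other hand, positive definiteness together with the elementary inequality $a^2+c^2\geq\tfrac12(a+c)^2$ gives $Q(a,c)\geq\lambda_s(a^2+c^2)\geq\tfrac{\lambda_s}{2}(a+c)^2=\tfrac{\lambda_s}{2}v^2$. Combining, $\tfrac{\lambda_s}{2}v(x_0)^2\leq M\,v(x_0)$, whence $v(x_0)\leq 2M/\lambda_s$. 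If instead the maximum of $v$ lies on $\partial\Omega$, the hypothesis \eqref{BC} directly yields $v\leq t_+^2+t_-^2$ there. Both alternatives together give the asserted bound on $\overline\Omega$.

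I do not anticipate any real obstacle here: the proof is essentially a one-shot application of the weak maximum principle once the pointwise identity is in place. The only mild subtlety is recognizing that the nonlinear couplings arrange themselves into a single quadratic form in $(a,c)$ with matrix $\mathcal{A}$, which is precisely where (H) is used—to guarantee $\lambda_s>0$ and convert the bound $Q\geq\lambda_s(a^2+c^2)$ into a bound on $v(x_0)$ itself. No deeper machinery (e.g.\ a comparison principle for the full system) is needed at this stage.
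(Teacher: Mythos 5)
Your argument is essentially the paper's: both multiply the equations by $\bar\psi_\pm$, sum, and arrive at the differential inequality $\tfrac12\Delta|\Psi|^2\ \ge\ Q(a,c)-\alpha a-\beta c\ \ge\ \tfrac{\lambda_s}{2}|\Psi|^2\bigl(|\Psi|^2-\tfrac{2M}{\lambda_s}\bigr)$, then conclude by the maximum principle; the only cosmetic difference is that you evaluate at an interior maximum point using $-\Delta v(x_0)\ge 0$, whereas the paper treats $v=|\Psi|^2-\tfrac{2M}{\lambda_s}$ as a subsolution of the linear problem $-\Delta v+\lambda_s|\Psi|^2\,v\le 0$ and splits into cases according to the sign of the boundary datum. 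One caveat, which applies verbatim to the paper's own write-up as well: the dichotomy ``interior maximum $\Rightarrow v\le 2M/\lambda_s$, boundary maximum $\Rightarrow v\le t_+^2+t_-^2$'' only yields $|\Psi|^2\le\max\{2M/\lambda_s,\ t_+^2+t_-^2\}$, not the stated $\min$, so your closing sentence (``both alternatives together give the asserted bound'') glosses over the same gap the paper does; since all that is used downstream is a uniform bound independent of $\Omega$, this does not affect anything else, but as written neither proof establishes the $\min$.
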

 
We note that the bound on $\|\Psi\|_\infty$ depends only on the coefficients in the energy, and in particular it is independent of the domain $\Omega$.
Furthermore, the quantity $M>0$ whenever $B^2<A_+A_-$.

\begin{proof}
Let $V(x)=|\Psi|^{2}$.
We then calculate:
\begin{align}\nnn
\dis {1\over2}\Delta V&=(\psi_{+},\nabla\psi_{+})+(\psi_{-},\nabla\psi_{-})+|\nabla\Psi|^2\nnn\\
                                      &\ge (\psi_{+},\nabla\psi_{+})+(\psi_{-},\nabla\psi_{-})\nnn\\
                                      &\dis =[A_{+}(|\psi_{+}|^{2}-t^2_{+})+B(|\psi_{-}|^{2}-t^2_{-})]|\psi_+|^2 +[A_{-}(|\psi_{-}|^{2}-t^2_{-})+B(|\psi_{+}|^{2}-t^2_{+})]|\psi_-|^2\nnn\\
                                      &\dis=\left[A_{+}|\psi_{+}|^{4}+2B|\psi_{+}|^{2}|\psi_{-}|^{2}+A_{-}|\psi_{-}|^{4}-(A_{+}t^2_{+}+Bt^2_{-})|\psi_{+}|^{2}-(A_{-}t^2_{-}+Bt^2_{+})|\psi_{-}|^{2} \right]\nnn\\
                                      &\dis\ge  \left[\lambda_{s}(|\psi_{+}|^{4}+|\psi_{-}|^{4})-(A_{+}t^2_{+}+Bt^2_{-})|\psi_{+}|^{2}-(A_{-}t^2_{-}+Bt^2_{+})|\psi_{-}|^{2} \right]\nnn\\
                                      &\dis \ge \left({\lambda_{s}\over 2}|\Psi|^{4}-M|\Psi|^2\right)\nnn\\
                                      &\dis=\frac{\lambda_{s}}{2}V\left(V-\frac{2M}{\lambda_{s}}\right).\label{Vineq}
\end{align}

Now let $v=V-\frac{2M}{\lambda_{s}}$, which is then a subsolution of the following boundary value problem:
$$
\left\{
\begin{gathered}\nnn
-\Delta v+a(x)v\le 0 \ \ \ \ \ \ \mbox{in}\ \ \ \Omega,\nnn\\
v\Big|_{\partial\Omega}\le t^2_{+}+t^2_{-}-\frac{2M}{\lambda_{s}},
\end{gathered}
\right.
$$
with $a(x)=\lambda_{s}V>0$.

If $t^2_{+}+t^2_{-}-\frac{2M}{\lambda_{s}}\le0$, by maximum principle $v\le0$ on $\Omega$, i.e. $V\le\frac{2M}{\lambda_{s}}$. On the other hand, if $t^2_{+}+t^2_{-}-\frac{2M}{\lambda_{s}}>0$, by maximum principle again $\dis\max_{\overline{\Omega}}v$ can be attained on $\partial\Omega$, i.e. $\dis\max_{\overline{\Omega}}v\le t^2_{+}+t^2_{-}-\frac{2M}{\lambda_{s}}$ and $v\le \dis t^2_{+}+t^2_{-}-\frac{2M}{\lambda_{s}}$, i.e. $V\le t^2_{+}+t^2_{-}$. We thus conclude that $V\le\dis\min\left\{{2M \over \lambda_{s}}, t^2_{+}+t^2_{-}\right\}$, i.e. $|\Psi|^{2}\le\dis\min\left\{{2M \over \lambda_{s}}, t^2_{+}+t^2_{-}\right\}$.
\end{proof}

We may now prove the existence and uniqueness theorem.

\begin{proof}[Proof of Proposition~\ref{lemunique}]
Without loss of generality, we assume $n_\pm\ge 0$, as negative degrees may be obtained by complex conjugation of $\psi_\pm$.
To obtain the existence we consider the problem defined in the ball $B_{R},\ R>0$,
\be\label{radeqR}
\left\{
\begin{array}{l}
 \dis -f''_{\pm}-\frac{1}{r}f'_{\pm}+\frac{n^2_{\pm}}{r^2}f_{\pm}+\left[ A_{\pm}(f^2_{\pm}-t^2_{\pm})+B(f^2_{\mp}-t^2_{\mp})\right]f_{\pm}=0, \ \ {\text{for}}\ 0<r<R, \\
 \dis f_{\pm}(R)= t_{\pm}, \\
f_{\pm}(0)=0 \ \ \text{if}\ \ n_{\pm}\neq0; \ \ \ \ f'_{\pm}(0)=0 \ \ \text{if}\ \ n_{\pm}=0.
\end{array}
\right.
\ee

The existence of such a solution follows easily by  minimization of the energy 
\be\label{radenergy}
\left.\begin{array}{l}
E^{R}_{n_{+},\ n_{-}}(\fP,\ \fM) \\
\ \ \ \ \ \dis =\frac12\int^R_0\left\{\sum_{i=\pm}\left[(f'_{i})^{2}+\frac{n^2_{\pm}}{r^2}f^2_{i}+\frac12[A_{i}(f^2_{i}-t^2_{i})^{2}+2B(f^2_{+}-t^2_{+})(f^2_{-}-t^2_{-})]\right]\right\}rdr\ ,
\end{array}
\right.
\ee
over Sobolev functions satisfying the appropriate boundary conditions at $r=0$ and $r=R$. Denote $[f_{R,+}(r),\ f_{R,-}(r)]$ as any solution of \eqref{radeqR}.  Since $E^{R}_{n_{+},\ n_{-}}(|f_+|,|f_-|)=E^{R}_{n_{+},\ n_{-}}(\fP,\ \fM)$, and there exists a minimizer which is nonnegative, $f_{\pm}\ge0$ in $[0, R]$.

By Proposition~\ref{uniformbound}, the solution pair is bounded, $f^2_{R,+}(r)+f^2_{R,-}(r)\le\Lambda^2:=\min\{{2M\over \lambda_s},t_+^2+t_-^2\}$, for any solution to \eqref{radeq}, uniformly in $R$.  By $L^p$ estimates (applied to each equation separately), we may conclude that $f_{R,\pm}$ is uniformly bounded in $W^{2,p}_{loc}$ for any $p$, and by elliptic regularity the family is bounded in $\mathcal{C}^k_{loc}$ for any $k$.  Thus, there exists a subsequence $R_{n}\to\infty$ for which the solution $[f_{R,+}(r),\ f_{R,-}(r)]\to[f_{\infty,+}(r),\ f_{\infty,-}(r)]$  in $\mathcal{C}^{k}_{\text{loc}}([0,\infty))$, and the limit functions $[f_{\infty,+}(r),\ f_{\infty,-}(r)]$ give (weak) solutions to the ODE on $(0,\infty)$ with the same boundary condition at $r=0$ and satisfying the uniform bound $f^2_{\infty,+}(r)+f^2_{\infty,-}(r)\le\Lambda$ for all $r\in [0,\infty)$.  By uniform convergence, we also have $f_{\infty,\pm}(r)\ge 0$ for all $r\in [0,\infty)$.

To determine the behavior near $r=0$, we write the ODE in the form:
$$
\dis -f''_{\pm}-\frac{f'_{\pm}}{r}+\frac{n^2_{\pm}}{r^2}f_{\pm}=g_\pm(r),
$$ 
with $g_\pm$ smooth, and  $g_\pm(0)=0$ for $n_\pm\neq 0$.
By the Frobenius theory (see Appendix~3  of \cite{BC} or Lemma~5.9 of \cite{ABM2}), we deduce that the behavior $f_{\infty,\pm}\sim r^{n_\pm}$ near $r=0$, which is \eqref{fnear0}.  As a consequence (see Theorem~3.1 of \cite{BC}), we may conclude that $\psi_{\pm}(x)=f_{\infty,\pm}(r)e^{in_{\pm}\tta}$ is regular at $r=0$ and solves \eqref{eqns} in $\mathbb{ R}^2$.

Next we must verify that $f_{\infty,\pm}$ attains the desired asymptotic conditions as $r\to\infty$.  We first claim that
\begin{equation}\label{intcond}
\int_0^\infty \left[ (f_{\infty,+}^2(r)-t_+^2)^2 + (f_{\infty,-}^2(r)-t_-^2)^2\right] r\, dr
<\infty.
\end{equation}
For this purpose, we derive a Pohozaev identity: we multiply the equation of $f_{R,\pm}$ by $r^{2}f'_{R,\pm}(r)$ and integrate by parts with respect to $r\in (0,R)$. Adding the two resulting identities gives:
\begin{multline}\label{Pohozaev}
 [Rf'_{R,+}(R)]^{2}+[Rf'_{R,-}(R)]^{2} \\
 +\int^{R}_{0}\left\{A_{+}(f^2_{R,+}-t^2_{+})^2+A_{-}(f^2_{R,-}-t^2_{-})^2
+2B(f^2_{R,+}-t^2_{+})(f^2_{R,-}-t^2_{-})\right\}
rdr=
\\  n^2_{+}t^2_{+}+n^2_{-}t^2_{-},
\end{multline}
which is the Pohozaev identity.
By the uniform convergence $f_{R,\pm}\to f_{\infty,\pm}$ on $[0,R_{0}]$ for any $R_{0}>0$ we have
\begin{align*}
&\lambda_s\int_0^{R_0} \left[ (f_{\infty,+}^2(r)-t_+^2)^2 + (f_{\infty,-}^2(r)-t_-^2)^2\right] r\, dr 
\\ &\qquad\le
\int^{R_0}_{0}\left\{A_{+}(f^2_{\infty,+}-t^2_{+})^2+A_{-}(f^2_{\infty,-}-t^2_{-})^2+2B(f^2_{\infty,+}-t^2_{+})(f^2_{\infty,-}-t^2_{-})\right\}rdr \\
&\qquad\le n^2_{+}t^2_{+}+n^2_{-}t^2_{-},
\end{align*}
for any fixed $R_0>0$. Letting $R_{0}\to\infty$ we establish the condition \eqref{intcond}. 

Next, we prove \eqref{gradint}.   In the remainder of the proof, we write $f_\pm=f_{\infty,\pm}$.  Let $\eta_1(r)$ be a smooth function with $\eta_1(r)=1$ for $0\le r\le 1$ and $\eta_1(r)=0$ when $r\ge 2$, and set $\eta_R(r):=\eta_1(r/R)$.
We then multiply the equation for $f_\pm$ by $r(t_\pm-f_\pm)\eta_R$  and integrate by parts to obtain:
\begin{align*}
&\int_0^\infty\left[
  {n_\pm^2\over r^2} f_\pm(t_\pm-f_\pm)\eta_R +
     A_\pm(f_\pm^2-t_\pm^2)f_\pm(t_\pm-f_\pm)
     + B(f_\mp^2-t_\mp^2)f_\pm(t_\pm-f_\pm)\right] r\, dr \\
     &\qquad 
       = \int_0^\infty \left(f'_\pm\right)^2\eta_R\, r\, dr
           + \frac12 \int_0^\infty \left( (t_\pm-f_\pm)^2\right)' \eta'_R\, r\, dr
           \\
     &\qquad
       =  \int_0^\infty \left(f'_\pm\right)^2\eta_R\, r\, dr
           - \frac12 \int_0^\infty (t_\pm-f_\pm)^2\, \Delta_r\eta_R\, r\, dr,
\end{align*}
where $\Delta_{r}f:=\frac{1}{r}(rf'(r))'$ is  the Laplacian for radial functions.  
We now estimate each term, using \eqref{intcond}.  We note that, as $f_\pm\ge 0$, we have 
$(t_\pm^2 - f_\pm^2)^2 = (t_\pm+f_\pm)^2(t_\pm-f_\pm)^2 \ge
t_\pm^2(t_\pm-f_\pm)^2$,
and hence 
$\| t_\pm-f_\pm \|_{L^2}\le {1\over t_\pm}\| t_\pm^2-f_\pm^2\|_{L^2}<\infty$ by \eqref{intcond}.  Thus, we obtain the bounds (uniform in $R$),
\begin{gather*}
\int_0^\infty (t_\pm - f_\pm)^2 |\Delta_r\eta_R|\, r\, dr \le {C\over R^2}
  \int_0^\infty (t_\pm - f_\pm)^2\, r\, dr \to 0, \\
\left|\int_0^\infty {n_\pm^2\over r^2} f_\pm(t_\pm-f_\pm)\eta_R\, r\, dr
\right| \le \Lambda \left\| {n_\pm^2\over r^2}\right\|_{L^2}
    \| t_\pm-f_\pm\|_{L^2} \le C_1, \\
\left|  \int_0^\infty A_\pm (f_\pm^2-t_\pm^2)f_\pm(t_\pm-f_\pm)\, r\, dr
\right| \le A_\pm \Lambda \| f_\pm^2 - t_\pm^2\|_{L^2} 
   \| f_\pm-t_\pm\|_{L^2} \le C_2,
\end{gather*}
and similarly for the $B$ term in the integral.  We thus conclude that 
$$ \int_0^\infty \left(f'_\pm\right)^2\eta_R\, r\, dr\le C_3 $$
is bounded independently of $R$, and the finiteness of the integral \eqref{gradint} follows from Fatou's lemma.

The limit statement $f_{\pm}(r)\to t_\pm$ as $r\to\infty$ now follows from Proposition~\ref{uniformbound} and the bounds \eqref{intcond} and \eqref{gradint}.  Indeed, for any $0<r_1<r_2$,
\begin{align*}
(f_{\pm}^2(r_2)-t_\pm^2)^2 - 
(f_{\pm}^2(r_1)-t_\pm^2)^2 & = 
\int_{r_1}^{r_2} 4(f_{\pm}^2(r)-t_\pm^2){f_{\pm}(r)\over r} f'_{\pm}\, r\, dr \\
&\le 2{\Lambda\over r_1}\int_{r_1}^{r_2} \left[
    \left(f'_{\pm}(r)\right)^2 + (f_{\pm}^2-t_\pm^2)^2\right]
       \, r\, dr,
\end{align*}
and hence $(f_{\pm}^2(r)-t_\pm^2)^2$ is Cauchy as $r\to\infty$.

For the existence part of the theorem, it remains to verify \eqref{BMR1}, we note that by \eqref{gradint} there must exist a sequence $R_n\to\infty$ such that $R_n f'_{\pm}(R_n)\to 0$.  Applying \eqref{Pohozaev} with $R=R_n$, and passing to the $n\to\infty$ limit gives \eqref{BMR1}.This completes the existence part of Proposition~\ref{lemunique}.

Uniqueness may be proven in (at least) two different ways.  The first employs the method of Brezis and Oswald \cite{BO}, which we now sketch. Let $[n_{+},n_{-}]\in \mathbb{ Z}^2$ to be given, and suppose $[f_{+},\ f_{-}]$ and $[g_{+},\ g_{-}]$ are two solutions of \eqref{radeq}. Dividing the equations for $f_\pm$ by $f_\pm$ (and similarly for $g_\pm$), multiplying by $f^2_{+}-g^2_{+}$ and integrating by parts, we obtain:

\begin{align*}
&-\int^{\infty}_{0}\left[A_{+}(f^2_{+}-g^2_{+})^{2}+B(f^2_{+}-g^2_{+})(f^2_{-}-g^2_{-})\right]rdr \\
& \qquad\qquad\qquad =
\int^{\infty}_{0}\left[ -\frac{\Delta_{r}(f_{+})}{f_{+}}+\frac{\Delta_{r}(g_{+})}{g_{+}}\right](f^2_{+}-g^2_{+})rdr \\
&\qquad\qquad\qquad
=\dis \int^{\infty}_{0}\left[(f'_{+})^{2}-2\frac{f_{+}}{g_{+}}f'_{+}g'_{+}+\frac{f^2_{+}}{g^2_{+}}(g'_{+})^{2}\right]rdr\\
&\quad\qquad\qquad\qquad
 + \int^{\infty}_{0}\left[(g'_{+})^{2}-2\frac{g_{+}}{f_{+}}f'_{+}g'_{+}+\frac{g^2_{+}}{f^2_{+}}(f'_{+})^{2}\right]rdr\\
&
\qquad\qquad\qquad=\dis \int^{\infty}_{0}\left(\left|f'_{+}-\frac{f_{+}}{g_{+}}g'_{+}\right|^2 +\left|g'_{+}-\frac{g_{+}}{f_{+}}f'_{+}\right|^2\right)
rdr,
\end{align*}
and similarly,
\begin{multline*}
\dis \int^{\infty}_{0}\left(
\left|f'_{-}-\frac{f_{-}}{g_{-}}g'_{-}\right|^2+\left|g'_{-}-\frac{g_{-}}{f_{-}}f'_{-}\right|^2\right) rdr
\\
 =\dis -\int^{\infty}_{0}\left[A_{-}(f^2_{-}-g^2_{-})^{2}+B(f^2_{+}-g^2_{+})(f^2_{-}-g^2_{-})\right]rdr.
\end{multline*}
Adding the above identities together we obtain that
\begin{equation}
\left.
\begin{array}{rl}
0&\le\dis\int^{\infty}_{0}\left(\left|f'_{+}-\frac{f_{+}}{g_{+}}g'_{+}\right|^2 +\left|g'_{+}-\frac{g_{+}}{f_{+}}f'_{+}\right|^2+\left|f'_{-}-\frac{f_{-}}{g_{-}}g'_{-}\right|^2 +\left|g'_{-}-\frac{g_{-}}{f_{-}}f'_{-}\right|^2 \right)rdr\\
&=\dis -\int^{\infty}_{0}\left\{A_{+}(f^2_{+}-g^2_{+})^{2}+A_{-}(f^2_{-}-g^2_{-})^{2}+2B(f^2_{+}-g^2_{+})(f^2_{-}-g^2_{-})\right\}rdr\le 0,
\end{array}
\right.
\end{equation}
as the quadratic form on the right-hand side is positive definite. 
Therefore, 
 $f^{2}_{\pm}-g^{2}_{\pm}\equiv0$, i.e. $f_{\pm}\equiv g_{\pm}$ for all $r\in (0,\infty)$, and this completes the first proof of uniqueness.
 
 For a second proof, we note that the system of equations for $f_\pm$ may be written in the form
$$  \left. \begin{gathered}
-\Delta f_+ + \partial_u G(r,f_+^2, f_-^2)f_+ =0, \\
-\Delta f_- + \partial_v G(r,f_+^2, f_-^2)f_- =0, \\
f_\pm(r)\ge 0,
\end{gathered}
\right\}
$$
 with
$$ G(r,u,v) = {n_+^2\over r^2}u + {n_-^2\over r^2} v +
\frac12\left( A_+(u-t_+^2)^2 + 2B(u-t_+^2)(v-t_-^2) + A_-(v-t_-^2)^2\right).
$$
As $G$ is convex in $(u,v)$, we may apply Theorem~4.1 of \cite{ABM1} (together with the limiting argument given in that paper, to apply the theorem to the semi-infinite interval) to conclude uniqueness.  
\end{proof}

\section{Asymptotics}

We derive the asymptotic behavior of solutions $f_\pm(r)$ as $r\to\infty$ by means of the sub- and super-solutions method.  The basic tool we require is the following useful and original comparison lemma:

\begin{lemma}\label{generalcomplem}
Let $\mathbb{A, B, C, D}$ be bounded functions on $[R,\infty)$.

$\mathrm{(A)}$ Assume $\mathbb{A}$, $\mathbb{D}>0$, $\mathbb{B}$, $\mathbb{C}\le0$ and $4\mathbb{AD}-(\mathbb{B+C})^2>0$ in $[R,\infty)$. Then, if $u, v$ are radial solutions of the following problem
$$
\left\{
\begin{gathered}
\dis -\Delta_r u+ {m^2\over r^2}u+\mathbb{A}u+\mathbb{B} v\le 0\ \ \text{in}\ \ \ [R,\infty),\\
\dis -\Delta_r v+{n^2\over r^2}v+\mathbb{C}u+\mathbb{D} v\le 0\ \ \text{in}\ \ \ [R,\infty),
\end{gathered}
\right.
$$
with $u(R)\le0, v(R)\le0$, $u, v$ are bounded in $[R, \infty)$ with $\int^{\infty}_{R}(u')^2 rdr<\infty$ and $\int^{\infty}_{R}(v')^2 rdr<\infty$, we have that $u\le0$ and $v\le0$ in $[R, \infty)$.

$\mathrm{(B)}$ Assume $\mathbb{A}$, $\mathbb{D}>0$, $\mathbb{B}$, $\mathbb{C}\ge0$ and $4\mathbb{AD}-(\mathbb{B+C})^2>0$ in $[R,\infty)$. Then, if $u, v$ are radial solutions of the following problem
$$
\left\{
\begin{gathered}
\dis -\Delta_r u+{m^2\over r^2}u+\mathbb{A}u+\mathbb{B} v\le 0\ \ \text{in}\ \ \ [R,\infty),\\
\dis -\Delta_r v+{n^2\over r^2}v+\mathbb{C}u+\mathbb{D} v\ge 0\ \ \text{in}\ \ \ [R,\infty),
\end{gathered}
\right.
$$
with $u(R)\le0\le v(R)$, $u, v$ are bounded in $[R, \infty)$ with $\int^{\infty}_{R}(u')^2 rdr<\infty$ and $\int^{\infty}_{R}(v')^2 rdr<\infty$, we have that $u\le0\le v$ in $[R, \infty)$.
\end{lemma}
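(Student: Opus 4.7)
My plan is to test each differential inequality against an appropriate sign-truncation (positive or negative part) of the unknowns, multiplied by a smooth radial cutoff at infinity, and to add the resulting weighted energy inequalities. The coefficient hypothesis $4\mathbb{AD}-(\mathbb{B}+\mathbb{C})^2 > 0$ is precisely what closes the argument, by making the quadratic form that emerges after addition pointwise positive definite. The main obstacle I anticipate is the cutoff error at infinity: since $u$ and $v$ are not assumed to decay, naive integration by parts on $[R,\infty)$ is unavailable, and one must use the hypothesized weighted $L^2$-integrability of $u'$ and $v'$ (together with boundedness) in an annular Cauchy--Schwarz estimate.

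For case (A), fix a smooth cutoff $\eta_L(r) := \eta_1(r/L)$ with $\eta_1 \equiv 1$ on $[0,1]$ and $\eta_1 \equiv 0$ on $[2,\infty)$. I would multiply the first inequality by $r u^+ \eta_L$ and integrate by parts. The boundary term at $r=R$ vanishes since $u(R)\le 0$ forces $u^+(R)=0$, the one at infinity is killed by $\eta_L$, and the residual cutoff contribution obeys
\begin{equation*}
\Bigl| \int_R^\infty r\, u'\, u^+\, \eta_L'\, dr \Bigr| \le \frac{C\, \|u^+\|_\infty}{L} \Bigl( \int_L^{2L} (u')^2\, r\, dr \Bigr)^{1/2} \Bigl( \int_L^{2L} r\, dr \Bigr)^{1/2} \longrightarrow 0
\end{equation*}
as $L\to\infty$. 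Since $\mathbb{B}\le 0$ and $v = v^+ - v^-$ with $u^+\ge 0$, we have $\mathbb{B} v u^+ \ge \mathbb{B} v^+ u^+$; the parallel treatment of the second inequality with test function $r v^+ \eta_L$ yields an analogous estimate. Adding the two and letting $L\to\infty$ produces
\begin{align*}
0 \ge{} &\int_R^\infty \bigl[((u^+)')^2 + ((v^+)')^2\bigr]\, r\, dr
+ \int_R^\infty \frac{m^2 (u^+)^2 + n^2 (v^+)^2}{r}\, dr \\
&{}+ \int_R^\infty \bigl[ \mathbb{A}(u^+)^2 + (\mathbb{B}+\mathbb{C})\, u^+ v^+ + \mathbb{D}(v^+)^2 \bigr]\, r\, dr.
\end{align*}
The last integrand is a positive-definite quadratic form in $(u^+, v^+)$ by the coefficient hypothesis, so every term vanishes and $u^+\equiv v^+\equiv 0$.

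Case (B) follows the same outline with one sign adjustment: the second inequality has the reversed sign, so I would test it instead against $r v^- \eta_L$, and the boundary term at $r=R$ now vanishes because $v(R)\ge 0$ forces $v^-(R)=0$. With $\mathbb{B}, \mathbb{C}\ge 0$, the decompositions $v = v^+ - v^-$ and $u = u^+ - u^-$ give $-\mathbb{B} v u^+ \le \mathbb{B} v^- u^+$ and $\mathbb{C} u v^- \le \mathbb{C} u^+ v^-$. Adding the two truncated inequalities yields the same structure as above, now with the quadratic form $\mathbb{A}(u^+)^2 - (\mathbb{B}+\mathbb{C})\, u^+ v^- + \mathbb{D}(v^-)^2$; positive definiteness follows from the same discriminant condition, forcing $u^+\equiv v^-\equiv 0$.
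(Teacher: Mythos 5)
Your proof is correct and follows essentially the same route as the paper's: test each inequality against the appropriate sign-truncation ($u^+,v^+$ in case (A); $u^+,v^-$ in case (B)), discard the favorably-signed cross terms, add, and invoke the pointwise positive definiteness of the resulting quadratic form guaranteed by $4\mathbb{A}\mathbb{D}-(\mathbb{B}+\mathbb{C})^2>0$. The only (immaterial) difference is the treatment of the boundary term at infinity: you use a smooth cutoff $\eta_L$ with a Cauchy--Schwarz estimate on the annulus $[L,2L]$, whereas the paper integrates over $[R,T]$ and extracts a sequence $T_n\to\infty$ with $T_n u'(T_n)\to 0$ from $\int_R^\infty (u')^2\,r\,dr<\infty$; both devices rest on exactly the same hypotheses.
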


\begin{proof}
To verify (A), multiply the respective equations by $u^{+}=\text{max} (u, 0)$ and $v^{+}=\text{max} (v, 0)$ and integrate by parts in $[R, T]$ for $\forall T\ge R>0$. Since $u$ and $v$ are radial solutions, we have
$$
\dis -\int^T_{R}\Delta u\cdot u^{+} dr=-\int^T_{R} (ru')'u^{+} dr=-\left(u'u^{+}r\big|^T_{R}\right)+\int^T_{R}[(u^{+})']^{2}rdr ,
$$
hence, 
$$
\dis -\left(u'u^{+}r\big|^T_{R}\right)+\int^T_{R}\left\{
  [(u^{+})']^{2}+ {m^2\over r^2}(u^+)^2
  +[\mathbb{A}(u^{+})^{2}+\mathbb{B}(v^{+}+v^{-})u^{+}]\right\}
  rdr\le0 ,
$$
i.e.
\begin{align}
\dis &\int^T_{R}\left\{
  [(u^{+})']^{2}+ {m^2\over r^2}(u^+)^2
  +[\mathbb{A}(u^{+})^{2}+\mathbb{B}(v^{+}+v^{-})u^{+}]\right\}
  rdr\nnn\\
&\dis\qquad \le u'(T)u^{+}(T)T-u'(R)u^{+}(R)R\nnn\\
& \dis\qquad \le u'(T)u^{+}(T)T\label{upineq}\ ,
\end{align}
since $u'(R)u^{+}(R)R=0$ by $u(R)\le 0, u^{+}(R)=0$. 

We now  claim that there exists $T_{n}\to\infty$ such that $u'(T_{n})T_{n}\to 0$.  Indeed, if not, there exist $c_0,T_0>0$ with $u'(r)r\ge c_{0}$ for all $r\ge T_{0}$. It follows that $u'(r)\ge\frac{c_{0}}{r}$, then we obtain that $[u'(r)]^{2}r\ge\frac{c^2_{0}}{r}\notin L^{1}$, which is a contradiction.

Combining the boundness of $u$ in $[R,\infty)$ and the result of the above claim, letting $T\to\infty$ on the both sides of \eqref{upineq}, we get 
\begin{equation}\label{upinfty}
\dis \int^{\infty}_{R}\left\{ [(u^{+})']^{2}rdr +{m^2\over r^2} (u^+)^2+[\mathbb{A}(u^{+})^{2}+\mathbb{B}u^{+}v^{+}+\mathbb{B}u^{+}v^{-}]\right\}rdr\le 0\ .
\end{equation}
Similarly, we have the inequality for $v^+$:
\begin{equation}\label{vpinfty}
\dis \int^{\infty}_{R}\left\{
 [(v^{+})']^{2}rdr +{n^2\over r^2} (v^+)^2 +
 [\mathbb{D}(v^{+})^{2}+\mathbb{C}u^{+}v^{+}+\mathbb{C}u^{-}v^{+}]\right\}rdr\le 0\ .
\end{equation}
Since $\mathbb{B}u^{+}v^{-}>0$, $\mathbb{C}u^{-}v^{+}>0$ with $u^{+}=\text{max} (u, 0)>0$,  $v^{+}=\text{max} (v, 0)>0$, $v^{-}=\text{min} (v, 0)<0$, we get that
$$
\dis\int^{\infty}_{R}\left\{
 [(u^{+})']^{2}+{m^2\over r^2}(u^+)^2+ [\mathbb{A}(u^{+})^{2}+\mathbb{B}u^{+}v^{+}]\right\} rdr
 \le 0,
$$ 
and similarly,
$$
\dis \int^{\infty}_{R}\left\{
 [(v^{+})']^{2}+ {n^2\over r^2}(v^+)^2 +
 [\mathbb{D}(v^{+})^{2}+\mathbb{C}u^{+}v^{+}]rdr\right\}rdr \le 0.
$$
Therefore, we deduce that
\begin{equation}\label{uvpineq}
\dis \int^{\infty}_{R} \left\{[(u^{+})']^{2}+[(v^{+})']^{2}
+ {m^2 (u^+)^2 + n^2 (v^+)^2\over r^2}
+\mathbb{A}(u^{+})^{2}+(\mathbb{B+C})u^{+}v^{+}+\mathbb{D}(v^{+})^{2}\right\}rdr\le 0\ .
\end{equation}
Note that the matrix associated to the quadratic form is positive definite in $[R,\infty)$ by hypothesis, so there exists a function $\lambda^{+}>0$ with
$$
\mathbb{A}(u^{+})^{2}+(\mathbb{B+C})u^{+}v^{+}+\mathbb{D}(v^{+})^{2}\ge \lambda^{+}\left[(u^{+})^{2}+(v^{+})^{2}\right]>0\ .
$$
In consequence,
$$
0<\dis \int^{\infty}_{R}  \left\{[(u^{+})']^{2}+[(v^{+})']^{2}
+ {m^2 (u^+)^2 + n^2 (v^+)^2\over r^2}
+\lambda^{+}\left[(u^{+})^{2}+(v^{+})^{2}\right]\right\}rdr\le0\ ,
$$
which implies that $u^{+}\equiv0\equiv v^{+}$ in $[R,\infty)$. Therefore $u(r)\le 0$ and $v(r)\le 0$ in $[R,\infty)$, and case (A) is verified.

\medskip

To prove (B) we multiply the first inequality by $u^+$, but multiply the second inequality by $v^{-}=\text{min}(v, 0)\le 0$. Then, integrate by parts, we get that
$$
\dis \int^T_{R} \left\{[(u^{+})']^{2}+\frac{m^2}{r^2}(u^+ )^{2}\right\}rdr+\int^T_{R} [\mathbb{A}(u^{+})^{2}+\mathbb{B}u^{+}v^{+}+\mathbb{B}u^{+}v^{-}]rdr\le u'(T)u^{+}(T)T
$$
and
$$
\dis \int^T_{R} \left\{[(v^{-})']^{2}+\frac{n^2}{r^2}(v^- )^{2}\right\}rdr+\int^T_{R} [\mathbb{C}u^{+}v^{-}+\mathbb{C}u^{-}v^{-}+\mathbb{D}(v^{-})^{2}]rdr\le v'(T)v^{-}(T)T
$$
by $v(R)\ge 0$ and $v^{-}(R)=0$.  By the same argument as in the above claim, there exists $T_{n}\to\infty$ such that $v'(T_{n})T_{n}\to 0$.  Hence, passing to the limit $T_n\to\infty$, we have
\begin{equation}\label{vminfty}
\dis \int^\infty_{R} \left\{[(v^{-})']^{2}+\frac{n^2}{r^2}(v^- )^{2}\right\}rdr+\int^\infty_{R} [\mathbb{C}u^{+}v^{-}+\mathbb{C}u^{-}v^{-}+\mathbb{D}(v^{-})^{2}]rdr\le 0.
\end{equation}
Since $\mathbb{B}u^{+}v^{+}\ge 0$, $\mathbb{C}u^{-}v^{-}\ge 0$ with $u^{+}=\text{max}(u, 0)>0$, $u^{-}=\text{min}(u, 0)<0$, $v^{+}=\text{max}(v, 0)>0$, $v^{-}=\text{min}(v, 0)<0$, we can drop the two terms $\mathbb{B}u^{+}v^{+}\ge 0$ and $\mathbb{C}u^{-}v^{-}\ge 0$ without affecting the inequalities \eqref{upinfty} and \eqref{vminfty}. We add \eqref{upinfty} and \eqref{vminfty} together, and note that the matrix associated to the quadratic form is positive definite in $[R, \infty)$ by hypothesis, so there exists a function $\lambda^{+}>0$ with
$$
\mathbb{A}(u^{+})^{2}+(\mathbb{B+C})u^{+}v^{-}+\mathbb{D}(v^{-})^{2}\ge \lambda^{+}[(u^{+})^{2}+(v^{-})^{2}]>0\ .
$$
Therefore, we obtain that
\begin{align*}
&\dis \int^{\infty}_{R} \left\{ [(u^{+})']^{2}+[(v^{-})']^{2}+
{m^2 (u^+)^2 + n^2 (v^-)^2\over r^2} +
\lambda^{+}[(u^{+})^{2}+(v^{-})^{2}]\right\}rdr\\
&\qquad \dis \le \int^{\infty}_{R} \left\{ [(u^{+})']^{2}+[(v^{-})']^{2}
+{m^2 (u^+)^2 + n^2 (v^-)^2\over r^2} 
+\mathbb{A}(u^{+})^{2}+(\mathbb{B+C})u^{+}v^{-}+\mathbb{D}(v^{-})^{2}\right\}rdr
\\  &\qquad\qquad\le 0\ .
\end{align*}
Consequently,
 $u^{+}\equiv 0\equiv v^{-}$ in $[R,\infty)$, and hence $u(r)\le 0$ and $v(r)\ge 0$ in $[R,\infty)$.
\end{proof}

After this preliminary, we introduce the asymptotics of the radial solution at $\infty$.

\begin{theorem}\label{asymptotics}
Let $[f_{+}, f_{-}]$ be the solution of \eqref{radeq} with degree pair $[n_{+},n_{-}]$ at $\infty$, then we have 
$$
f_{\pm}=\dis t_{\pm}+\frac{a_{\pm}}{r^2}+\frac{b_{\pm}}{r^4}+O(r^{-6})\ \ \ \ \text{as}\ \ r\to\infty,
$$
with 
\begin{equation}\label{apm}
a_{\pm}=\dis \frac{1}{2}\frac{Bn_\mp^2-A_{\mp}n_\pm^2}{(A_{+}A_{-}-B^{2})t_{\pm}},
\end{equation}
and 
\begin{equation}\label{bpm}
b_{\pm}=\dis -\frac{A^2_{\mp}(8n_\pm^2 +n_\pm^4)t^2_{\mp}
-BA_{\mp}(2n_\pm^2+8)n_\mp^2t^2_{\mp}
-8BA_{\pm}n_\mp^2t^2_{\pm}
+B^{2}(8t_\pm^2n_\pm^2+n_\mp^4t_\mp^2)t^2_{\pm}}{8(A_{+}A_{-}-B^2)^2 t^3_{\pm}t^2_{\mp}}.
\end{equation}
More specifically, let $A_{\pm}\ge0$ and $B_{0}>0$ so that $A_{+}A_{-}-B^{2}_0 >0$. Then, there exist positive constants $C_{1}, C_{2}, R$ such that
\begin{equation}\label{fpmasybound}
\dis\left|f_{\pm}(r)-\left(t_{\pm}+\frac{a_{\pm}}{r^2}+\frac{b_{\pm}}{r^4}\right)\right|\le\frac{C_{1}}{r^6} ,
\end{equation}
\begin{equation}\label{f'pmasybound}
\dis\left|f'_{\pm}(r)+\frac{2a_{\pm}}{r^3}\right|\le\frac{C_{2}}{r^5} ,
\end{equation}
hold for all $r\ge R$ and all $B$, $|B|\le B_0$.
\end{theorem}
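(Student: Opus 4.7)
The plan is to establish the asymptotic expansion by constructing explicit sub- and super-solutions from a formal matched expansion and invoking Lemma~\ref{generalcomplem} to trap $f_\pm$ between them. The argument proceeds in three stages: a formal balancing of powers of $r^{-1}$ to identify $a_\pm$ and $b_\pm$; a perturbed ansatz with a free $K/r^6$ correction whose residual sign can be prescribed; and a case split on $\operatorname{sgn} B$ when applying the comparison lemma.

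First I would substitute $\epsilon_\pm := f_\pm - t_\pm = a_\pm/r^2 + b_\pm/r^4 + \cdots$ into \eqref{radeq}, using $(f_\pm^2 - t_\pm^2) f_\pm = 2 t_\pm^2 \epsilon_\pm + 3 t_\pm \epsilon_\pm^2 + \epsilon_\pm^3$ and $\Delta_r(r^{-2k}) = 4k^2 r^{-2k-2}$. At order $r^{-2}$ one gets the linear system
\begin{equation*}
\begin{pmatrix} A_+ t_+ & B t_- \\ B t_+ & A_- t_- \end{pmatrix}
\begin{pmatrix} a_+ \\ a_- \end{pmatrix}
 = -\frac{1}{2}\begin{pmatrix} n_+^2 \\ n_-^2 \end{pmatrix},
\end{equation*}
whose determinant $(A_+A_- - B^2) t_+ t_-$ is positive by (H), yielding \eqref{apm}. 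The $O(r^{-4})$ balance produces a system with the \emph{same} coefficient matrix but an inhomogeneity built from the Laplacian contribution $-4a_\pm$, the cross term $n_\pm^2 a_\pm$, and the quadratic products $a_+^2, a_+a_-, a_-^2$ from the nonlinearity; solving gives \eqref{bpm}. Writing $\bar f_\pm(r) := t_\pm + a_\pm/r^2 + b_\pm/r^4$ and the perturbed ansatz $\bar f_\pm^{\sigma,K}(r) := \bar f_\pm(r) + \sigma_\pm K_\pm/r^6$ with $\sigma_\pm \in \{\pm 1\}$, $K_\pm > 0$, the cancellations above give
\begin{equation*}
\mathcal{L}_\pm[\bar f_\pm^{\sigma,K}] = \frac{1}{r^6}\Bigl( R_\pm^{(6)} + 2 A_\pm t_\pm^2 \sigma_\pm K_\pm + 2 B t_+ t_- \sigma_\mp K_\mp \Bigr) + O(r^{-8})
\end{equation*}
for explicit constants $R_\pm^{(6)}$. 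Because the matrix $\bigl(\begin{smallmatrix} 2 A_+ t_+^2 & 2 B t_+ t_- \\ 2 B t_+ t_- & 2 A_- t_-^2 \end{smallmatrix}\bigr)$ is positive definite under (H), choosing $K_\pm$ sufficiently large allows us to prescribe the sign of either bracketed residual independently.

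Setting $w_\pm := f_\pm - \bar f_\pm^{\sigma,K}$ and applying the mean value theorem to the nonlinearity in $\mathcal{L}_\pm[f_\pm] - \mathcal{L}_\pm[\bar f_\pm^{\sigma,K}]$ gives a linear system for $w_\pm$ whose diagonal coefficients tend to $2A_\pm t_\pm^2 > 0$ and whose off-diagonal coefficients tend to $2 B t_+ t_-$ as $r \to \infty$; for $R$ large enough the positive-definiteness $4\mathbb{AD} - (\mathbb{B}+\mathbb{C})^2 > 0$ required by Lemma~\ref{generalcomplem} holds on $[R,\infty)$. When $B \le 0$ the off-diagonals are nonpositive, so Case~(A) applies: with $\sigma_+ = \sigma_- = +1$ (super-solutions) we obtain $f_\pm \le \bar f_\pm + K_\pm/r^6$, and with $\sigma_+ = \sigma_- = -1$ the matching lower bound. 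When $B \ge 0$ the off-diagonals are nonnegative and Case~(B) applies, but only with mixed-sign conclusions: used once with $(\sigma_+,\sigma_-) = (+1,-1)$ it yields $f_+ \le \bar f_+ + K_+/r^6$ and $f_- \ge \bar f_- - K_-/r^6$, and once more with $(-1,+1)$ for the complementary pair, giving \eqref{fpmasybound}. The boundary inequalities at $r = R$ are met by taking $K_\pm$ large relative to the uniform bound on $f_\pm(R)$ from Proposition~\ref{uniformbound}. Finally, the derivative estimate \eqref{f'pmasybound} follows from \eqref{fpmasybound} by interior elliptic regularity applied to \eqref{radeq} on annuli $[r/2, 2r]$ and rescaling to get the $r^{-5}$ rate.

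The main obstacle is the case $B > 0$: Lemma~\ref{generalcomplem}(B) does not permit a two-sided bound on both components in a single application, so two different mixed-sign ansätze must be designed and the lemma invoked twice. A subsidiary nuisance is the laborious but routine bookkeeping that produces the explicit $R_\pm^{(6)}$, and hence \eqref{bpm}, from the $O(r^{-4})$ balance. Uniformity in $|B|\le B_0$ is then automatic: all constants $a_\pm, b_\pm, R_\pm^{(6)}, K_\pm$ depend continuously on $(A_\pm,B,t_\pm)$, and the lower bound on the positive-definiteness of the coefficient matrix is uniform on the compact set $\{|B|\le B_0\}$ under the hypothesis $A_+A_- - B_0^2 > 0$.
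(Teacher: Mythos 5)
Your proposal follows the same strategy as the paper's proof: determine $a_\pm, b_\pm$ by cancelling the $r^{-2}$ and $r^{-4}$ residuals (your two linear systems are exactly the paper's conditions $M^\pm_2=0$ and $M^\pm_4=0$), append a sixth-order correction whose residual sign is prescribed by inverting the positive-definite matrix $\left(\begin{smallmatrix} A_+t_+ & Bt_- \\ Bt_+ & A_-t_-\end{smallmatrix}\right)$, and then apply Lemma~\ref{generalcomplem}(A) twice (both super-, then both sub-solutions) when $B<0$ and Lemma~\ref{generalcomplem}(B) twice with the two mixed-sign ans\"atze when $B>0$. This case split and the double application of part (B) is precisely Steps 1--7 of the paper. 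The one genuinely different ingredient is the derivative bound: you obtain \eqref{f'pmasybound} from interior elliptic estimates on annuli $[r/2,2r]$ plus scaling, whereas the paper integrates $\frac1r(rW_\pm')'=O(r^{-6})$ from a mean-value point $r_k$ where $W_\pm'(r_k)=O(r_k^{-7})$; both yield the $r^{-5}$ rate, and yours is arguably cleaner.

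There is, however, one step that fails as written: the boundary inequality at $r=R$. With the unnormalized correction $\sigma_\pm K_\pm/r^6$ and $K_\pm$ a fixed constant, the correction at $r=R$ is $K_\pm/R^6\to 0$, while at this stage of the argument all that is known about $f_\pm(R)-t_\pm$ is that it tends to $0$ with \emph{no rate} (Proposition~\ref{lemunique} gives only convergence), so you cannot guarantee $f_+(R)\le \bar f_+(R)+K_+/R^6$. Your proposed fix --- take $K_\pm$ large relative to the uniform bound on $f_\pm$ --- forces $K_\pm\gtrsim R^6$; but then the quadratic and cubic residual contributions of order $K_\pm^2/r^{12}$ and $K_\pm^3/r^{18}$ are no longer dominated on $[R,\infty)$ by the linear term $K_\pm/r^6$, since that domination requires $K_\pm\ll R^6$. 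The paper resolves this tension by writing every correction as $\delta\,\tilde c_\pm\,(R/r)^{6}$ (and likewise $(R/r)^{2k}$ throughout): at $r=R$ the correction equals the fixed constant $\delta\tilde c_\pm$, which dominates $f_\pm(R)-t_\pm=o(1)$ once $R\ge R_0(\delta)$, while the higher-order residuals $M^\pm_{12},M^\pm_{18}$ carry extra factors of $\delta$ and of $(R/r)^2\le 1$ and are absorbed by $M^\pm_6$ for $\delta$ small. This two-parameter normalization (first $\delta$ small, then $R$ large) is not routine bookkeeping but the device that lets the comparison argument close; without it your construction does not produce an admissible barrier.
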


\begin{proof}
We treat the cases $B>0$ and $B<0$ separately. We start with $B>0$.

{\bf Step 1:} Construction of a sub-supsolution pair. Let
\begin{align}
\dis \bar{w}_{+}=t_{+}+a_{+}\frac{R^2}{r^2}+b_{+}\frac{R^4}{r^4}+\bar{c}_{+}\frac{R^6}{r^6},\ \label{wbarp}\\
\dis \underline{w}_{-}=t_{-}+a_{-}\frac{R^2}{r^2}+b_{-}\frac{R^4}{r^4}+\underline{c}_{-}\frac{R^6}{r^6},\ \label{wunderm}
\end{align}
where $a_{\pm}, b_{\pm}, \bar{c}_{+}, \underline{c}_{-}$ and $R$ are to be chosen so that
\begin{align}
\text{LHS($\bar{w}_{+}$)}=\dis -\bar{w}''_{+}-\frac{\bar{w}'_{+}}{r}+\frac{n_+^2}{r^2}\bar{w}_{+}+[A_{+}(\bar{w}^2_{+}-t^2_{+})+B(\underline{w}^2_{-}-t^2_{-})]\bar{w}_{+}\ge 0, \label{LHSwbarp}\\
\text{LHS($\underline{w}_{-}$)}=\dis -\underline{w}''_{-}-\frac{\underline{w}'_{-}}{r}+\frac{n_-^2}{r^2}\underline{w}_{-}+[A_{-}(\underline{w}^2_{-}-t^2_{-})+B(\bar{w}^2_{+}-t^2_{+})]\underline{w}_{-}\le 0, \label{LHSwunderm}
\end{align}
for all $r\ge R$, and $f_{+}(R)\le \bar{w}_{+}(R)$, $f_{-}(R)\ge \underline{w}_{-}(R)$.

Expanding \eqref{LHSwbarp} and \eqref{LHSwunderm}, we obtain terms which are polynomials in even powers of $r^{-1}$, 
\begin{equation}\label{Lwbarpsum}
\text{LHS($\bar{w}_{+}$)}=\dis \sum^{9}_{k=1} M^{+}_{2k}\left( {R\over r}\right)^{2k}, 
\end{equation}
\begin{equation}\label{Lwundermsum}
\text{LHS($\underline{w}_{-}$)}=\dis \sum^{9}_{k=1} M^{-}_{2k}\left( {R\over r}\right)^{2k},
\end{equation}
where $M^{\pm}_{2k}=M^{\pm}_{2k}(A_{\pm}, B, R, a_{\pm}, b_{\pm}, \bar{c}_{+}, \underline{c}_{-})$ is a polynomial in its arguments. 
The coefficients are quite intricate, but may be explicitly calculated with the aid of Maple software.

 First, we choose $a_\pm$ in order to force the lowest order coefficients
 $M_2^\pm$ to vanish:  that is, 
\begin{equation*}
M^{\pm}_{2}=[n_\pm^2+2(A_{\pm}t_{\pm}a_{\pm}+Bt_{\mp}a_{\mp})R^2 ]t_{\pm}=0
\end{equation*}
is achieved by choosing $a_\pm$ as in \eqref{apm}.
Similarly, we fix the values of the coefficients $b_\pm$ in order that the next term
\begin{equation*}
M^{\pm}_{4}=[(n_\pm^2-4)a_{\pm}R^{2}+2R^{4}(A_{\pm}b_{\pm}t^2_{\pm}+Bb_{\mp}t_{\pm}t_{\mp}+Ba_{\pm}a_{\mp}t_{\mp})+R^{4}t_{\pm}(3A_{\pm}a^2_{\pm}+Ba^2_{\mp})]t_{\pm}=0,
\end{equation*}
which determines the values for $b_\pm$ given in \eqref{bpm}.

Again using Maple, the values of $a_{\pm}, b_{\pm}$ may then be substituted into the expansions of \eqref{LHSwbarp} and \eqref{LHSwunderm}, and the expressions for $M^\pm_{2k}$ may be viewed as functions of $R$. The exact form of the coefficients $M^\pm_{2k}$ is very complex, but they are all rational expressions of the coefficients $A_\pm,B,t_\pm,n_\pm$, and as we will choose $R$ large, we are only interested in the leading order of each. We obtain:
$$
\begin{gathered}
M^{+}_{6}=2(B\underline{c}_{-}t_{-}+A_{+}\bar{c}_{+}t_{+})t_{+}+O(R^{-6}), \\
M^{-}_{6}=2(B\bar{c}_{+}t_{+}+A_{-}\underline{c}_{-}t_{-})t_{-}+O(R^{-6}), \\
M^{\pm}_{8}=O(R^{-2}),\ \ \ \ \ \ \ M^{\pm}_{10}=O(R^{-4}),\\
M^{+}_{12}=(A_{+}\bar{c}^2_{+}+B\underline{c}^2_{-})t_{+}+2(B\underline{c}_{-}t_{-}+A_{+}\bar{c}_{+}t_{+})\bar{c}_{+}+O(R^{-6}),\\
M^{-}_{12}=(A_{-}\underline{c}^2_{-}+B\bar{c}^2_{+})t_{-}+2(B\bar{c}_{+}t_{+}+A_{-}\underline{c}_{-}t_{-})\underline{c}_{-}+O(R^{-6}),\\
M^{\pm}_{14}=O(R^{-2}),\ \ \ \ \ \ \ M^{\pm}_{16}=O(R^{-4}),\\
M^{+}_{18}=(A_{+}\bar{c}^2_{+}+B\underline{c}^2_{-})\bar{c}_{+},\\
M^{-}_{18}=(A_{-}\underline{c}^2_{-}+B\bar{c}^2_{+})\underline{c}_{-},
\end{gathered}
$$
here $O(R^{-n})$ denotes terms which are small as $R\to\infty$ uniformly for $|B|\le B_0$.

As $M^{\pm}_{6}$ is the leading order term, we choose $\bar{c}_{+}, \underline{c}_{-}$ in order that it gives the correct signs, and so that it dominates the other terms for $r\ge R$.  To this end, we let $[\tilde{c}_{+}, \tilde{c}_{-}]$ be the unique solutions to the following system:
$$
\left\{
\begin{gathered}
A_{+}t_{+}\tilde{c}_{+}+Bt_{-}\tilde{c}_{-}=1\ ,\\
Bt_{+}\tilde{c}_{+}+A_{-}t_{-}\tilde{c}_{-}=-1\ .
\end{gathered}
\right.
$$
We note that, 
\begin{equation}
\tilde{c}_{+}=\dis\frac{A_{-}+B}{(A_{+}A_{-}-B^2)t_+}>0\ ,\ \ \ \ \ \ \tilde{c}_{-}=\dis-\frac{A_{+}+B}{(A_{+}A_{-}-B^2)t_-}<0\ .
\end{equation}
Let $\bar{c}_{+}=\delta\tilde{c}_{+},\ \underline{c}_{-}=\delta\tilde{c}_{-}$ with $0<\delta<1$ to be chosen later, hence $\bar{c}_{+}>0$, $\underline{c}_{-}<0$, and 
\begin{equation}\label{M6pm}
M^+_{6}=2\delta t_{+}>0,\ \ \ \ \ \ M^-_{6}=-2\delta t_{-}<0.
\end{equation}
By choosing $R$ sufficiently large, we obtain that 
\begin{equation}\label{4bound}
|M^{\pm}_{8}|, |M^{\pm}_{10}|, |M^{\pm}_{14}|, |M^{\pm}_{16}|\le \dis {1\over 20} |M^{\pm}_{6}|,
\end{equation}
and so we have that
\begin{multline}\label{4sumbound}
\dis \left|M^{\pm}_{8}\left({R\over r}\right)^{8}+M^{\pm}_{10}\left({R\over r}\right)^{10}+M^{\pm}_{14}\left({R\over r}\right)^{14}+M^{\pm}_{16}\left({R\over r}\right)^{16}\right|\\
\le {1\over 5} |M^{\pm}_{6}|\left({R\over r}\right)^{6}, \ \ \ \text{for}\ \ \forall r\ge R.
\end{multline}

Also,
\begin{align}\nnn
\dis |M^+_{12}|&=|\delta^{2}(A_{+}\tilde{c}^2_{+}+B\tilde{c}^2_{-})t_{+}+2\delta^{2}\tilde{c}_{+}|\nnn\\
                      &=\delta^{2}|(A_{+}\tilde{c}^2_{+}+B\tilde{c}^2_{-})t_{+}+2\tilde{c}_{+}|\nnn\\
                      &\le c_{1}(A_{\pm},B, t_{\pm})\delta^2\nnn\\
                      &\le {2\over 5} \delta t_+= \dis {1\over 5}|M^+_6|, \label{M12pp} 
\end{align}
for $R$ sufficiently large, provided we choose $\delta\le {2t_{+} \over 5c_1}$.

Meanwhile, 
\begin{align}\nnn
\dis |M^-_{12}|&=|\delta^{2}(A_{+}\tilde{c}^2_{+}+B\tilde{c}^2_{-})t_{+}+2\delta^{2}\tilde{c}_{+}|\nnn\\
                  &=\delta^{2}|(A_{-}\tilde{c}^2_{-}+B\tilde{c}^2_{+})t_{-}+2\tilde{c}_{-}|\nnn\\
                 &\le c_{2}(A_{\pm},B, t_{\pm})\delta^2\nnn\\
                 &\le {2\over 5} \delta t_-=\dis {1\over 5}|M^-_6| , \label{M12mp} 
\end{align}
for $R$ sufficiently large, provided we choose $\delta\le {2t_{-} \over 5c_2}$. Therefore, if we choose $\delta\le\min\{{2t_{+} \over 5c_1}, {2t_{-} \over 5c_2}\}$, it yields that $|M^\pm_{12}|\le {1\over 5}|M^\pm_6 |$. So we can deduce that 
\begin{equation}\label{M12}
\dis \left| M^\pm_{12} \left({R\over r}\right)^{12} \right|\le {1\over 5}\left|M^\pm_{6}  \left({R\over r}\right)^6 \right|.
\end{equation}
Finally, we note that  $M^\pm_{18}$ has the same sign as $M^\pm_6$, and so it contributes with the desired sign.

By \eqref{Lwbarpsum}-\eqref{Lwundermsum} and \eqref{M6pm}, \eqref{4bound}, \eqref{4sumbound}, \eqref{M12}, we deduce that
\begin{align}\nnn
\dis \sum^{9}_{k=1} M^{+}_{2k}\left( {R\over r}\right)^{2k}&\ge -{1\over 5}M^+_{6}\left({R\over r}\right)^{6}-{1\over 5}M^+_{6}\left({R\over r}\right)^{6}+M^+_{6}\left({R\over r}\right)^{6}\nnn\\
&=\dis {3\over 5}M^+_{6}\left({R\over r}\right)^{6}={6\over 5}\delta t_{+}\left({R\over r}\right)^{6}\nnn\\
&>\dis {3\over 5}\delta t_{+}\left({R\over r}\right)^{6}>0 ,
\end{align}
\begin{align}\nnn
\dis \sum^{9}_{k=1} M^{-}_{2k}\left( {R\over r}\right)^{2k}&\le {1\over 5}|M^-_{6}|\left({R\over r}\right)^{6}+{1\over 5}|M^-_{6}|\left({R\over r}\right)^{6}+M^-_{6}\left({R\over r}\right)^{6}\nnn\\
&\dis =-{1\over 5}M^-_{6}\left({R\over r}\right)^{6}-{1\over 5}M^-_{6}\left({R\over r}\right)^{6}+{1\over 5}M^-_{6}\left({R\over r}\right)^{6}\nnn\\
&\dis = {3\over 5}M^-_{6}\left({R\over r}\right)^{6}=-{6\over 5}\delta t_{-}\left({R\over r}\right)^{6}\nnn\\
&\dis\le -{3\over 5}\delta t_{-}\left({R\over r}\right)^{6}<0 ,
\end{align}
for all $r\ge R$. Thus, $\bar{w}_+$, $\underline{w}_-$ satisfy \eqref{LHSwbarp} and \eqref{LHSwunderm}, as desired.

Now we consider $\bar{w}_{+}$ and $\underline{w}_{-}$ at $r=R$. Combine \eqref{wbarp}, \eqref{wunderm} and \eqref{apm} we have $\bar{w}_{+}=t_{+}+\bar{c}_{+}-O(R^{-2})$, $\underline{w}_{-}=t_{-}+\underline{c}_{-}-O(R^{-2})$. Since $f_{\pm}(r)\to t_{\pm}$ as $r\to\infty$ and $\bar{c}_{+}>0, \underline{c}_{-}<0$, we deduce that $\bar{w}_{+}(R)>f_{+}(R)$ and $\underline{w}_{-}(R)<f_{-}(R)$ for $R\ge R_0$ sufficiently large. This completes Step 1.

{\bf Step 2:} We will apply Lemma~\ref{generalcomplem} to show that $\bar{w}_{+}(r)$ is a super-solution to $f_{+}$-equation of \eqref{radeq} and $\underline{w}_{-}(r)$ is a sub-solution to $f_{-}$-equation of \eqref{radeq}. Let $u=f_{+}-\bar{w}_{+}$, $v=f_{-}-\underline{w}_{-}$, together with equations \eqref{radeq}, \eqref{LHSwbarp} and \eqref{LHSwunderm}, and denote
$$  L_\pm u:= -\Delta_r u + {n_\pm^2\over r^2}u.  $$
We then have that:
\begin{align*}
L_+ u   &=A_{+}(t^2_{+}-f^2_{+})f_{+}+B(t^2_{-}-f^2_{-})f_{+}+A_{+}(\bar{w}^2_{+}-t^2_{+})\bar{w}_{+}+B(\underline{w}^2_{-}-t^2_{-})\bar{w}_{+}\\
                      &=A_{+}(\bar{w}^3_{+}-f^3_{+})-A_{+}t^2_{+}(\bar{w}_{+}-f_{+})+Bt^2_{-}(f_{+}-\bar{w}_{+})+B(\underline{w}^2_{-}\bar{w}_{+}-f^2_{-}f_{+})\\
                      &=[A_{+}t^2_{+}+Bt^2_{-}-A_{+}(\bar{w}^2_{+}+\bar{w}_{+}f_{+}+f^2_{+})]u+B(\underline{w}^2_{-}\bar{w}_{+}-f^2_{-}f_{+})\\
                      &=[A_{+}t^2_{+}+B(t^2_{-}-f^2_{-})-A_{+}(\bar{w}^2_{+}+\bar{w}_{+}f_{+}+f^2_{+})]u-B(\underline{w}_{-}+f_{-})\bar{w}_{+}v\ ,
\end{align*}
and
\begin{align*}
L_- v     &=A_{-}(t^2_{-}-f^2_{-})f_{-}+B(t^2_{+}-f^2_{+})f_{-}+A_{-}(\underline{w}^2_{-}-t^2_{-})\underline{w}_{-}+B(\bar{w}^2_{+}-t^2_{+})w_{-}\\
                      &=A_{-}(\underline{w}^3_{-}-f^3_{-})-A_{-}t^2_{-}(\underline{w}_{-}-f_{-})+Bt^2_{+}(f_{-}-\underline{w}_{-})+B(\bar{w}^2_{+}\underline{w}_{-}-f^2_{+}f_{-})\\
                      &=[A_{-}t^2_{-}+B(t^2_{+}-f^2_{+})-A_{-}(\underline{w}^2_{-}+\underline{w}_{-}f_{-}+f^2_{-})]v-B(\bar{w}_{+}+f_{+})\underline{w}_{-}u\ ,
\end{align*}
and thus \eqref{LHSwbarp} and \eqref{LHSwunderm} imply that
\begin{equation}
L_+u +[A_{+}(\bar{w}^2_{+}+\bar{w}_{+}f_{+}+f^2_{+})-A_{+}t^2_{+}+B(f^2_{-}-t^2_{-})]u+B(\underline{w}_{-}+f_{-})\bar{w}_{+}v\le 0\ ,
\end{equation}
\begin{equation}
L_- v+B(\bar{w}_{+}+f_{+})\underline{w}_{-}u+ [A_{-}(\underline{w}^2_{-}+\underline{w}_{-}f_{-}+f^2_{-})-A_{-}t^2_{-}+B(f^2_{+}-t^2_{+})]v\ge0\ .     
\end{equation}          
As in Lemma~\ref{generalcomplem}, we have a system on $[R,\infty)$ of
the form:       
$$
\left\{
\begin{gathered}
L_+ u+\mathbb{A}(r)u+\mathbb{B}(r)v\le0, \ \ \ \ u(R)\le0,\\
L_- v+\mathbb{C}(r)u+\mathbb{D}(r)v\ge0, \ \ \ \ v(R)\ge0,
\end{gathered}
\right.
$$
with 
$$
\begin{gathered}
\mathbb{A}(r)=A_{+}(\bar{w}^2_{+}+\bar{w}_{+}f_{+}+f^2_{+})-A_{+}t^2_{+}+B(f^2_{-}-t^2_{-}),\\
\mathbb{B}(r)=B(\underline{w}_{-}+f_{-})\bar{w}_{+},\ \ \ \ \mathbb{C}(r)=B(\bar{w}_{+}+f_{+})\underline{w}_{-},\\
\mathbb{D}(r)=A_{-}(\underline{w}^2_{-}+\underline{w}_{-}f_{-}+f^2_{-})-A_{-}t^2_{-}+B(f^2_{+}-t^2_{+}).
\end{gathered}
$$          
Now, we check the hypothesis of Lemma~\ref{generalcomplem} with the uniform convergence of $f_{\pm}(r)$ as $r\to\infty$:
$$
\begin{gathered}
\mathbb{A}(r)\to 2A_{+}t^2_{+},\ \ \ \  \ \ \ \ \ \ \mathbb{B}(r)\to 2Bt_{+}t_{-}, \\
\mathbb{C}(r)\to 2Bt_{+}t_{-}, \ \ \ \ \ \ \ \mathbb{D}(r)\to 2A_{-}t^2_{-},
\end{gathered}
$$         
and 
$$
4\mathbb{AD}-(\mathbb{B+C})^{2}\to 4\cdot 2A_{+}t^2_{+}\cdot2A_{-}t^2_{-}-(4Bt_{+}t_{-})^{2}\\
=16t^2_{+}t^2_{-}(A_{+}A_{-}-B^{2})>0,
$$
for $R$ sufficiently large.   
All conditions of Lemma~\ref{generalcomplem} are satisfied, it yields that $u(r)\le0, v(r)\ge0$ in $[R,\infty)$, i.e. $f_{+}(r)\le \bar{w}_{+}(r), f_{-}(r)\ge \underline{w}_{-}(r)$ in $[R,\infty)$.

{\bf Step 4:} We repeat the above procedures with the roles $\pm$ of $\bar{w}_{+}$ and $\underline{w}_-$ under the condition $B>0$. We may obtain $\underline{c}_{+}, \bar{c}_{-}, R$ so that $f_{+}(r)\ge \underline{w}_{+}(r), f_{-}(r)\le \bar{w}_{-}(r)$ in $[R,\infty)$. Combine the results from above steps, we get that
\begin{equation}\label{f+ss}
\underline{w}_{+}(r)\le f_{+}(r)\le \bar{w}_{+}(r),\ \ \ \underline{w}_{-}(r)\le f_{-}(r)\le \bar{w}_{-}(r), \ \ \ \text{for}\ \ \forall r\ge R,
\end{equation}
in the case $B>0$.

{\bf Step 5:}  Now we consider the case $B<0$. This is similar to the previous cases, but we must set up part (A) of the Comparison Lemma~\ref{generalcomplem}. Firstly we construct a super-solution pair, with $\bar{w}_{+}$ as in \eqref{wbarp}, $\bar{w}_{-}$ as following, 
\begin{equation}\label{wbarm}
\bar{w}_{-}=\dis t_{-}+a_{-}{R^{2}\over r^2}+b_{-}{R^{4}\over r^4}+\bar{c}_{-}{R^{6}\over r^6},
\end{equation}
with $a_{\pm}, b_{\pm}, \bar{c}_{\pm}$ and $R$ are to be chosen so that 
\begin{align}
\text{LHS($\bar{w}_{+}$)}=\dis -\bar{w}''_{+}-\frac{\bar{w}'_{+}}{r}+\frac{n_+^2}{r^2}\bar{w}_{+}+[A_{+}(\bar{w}^2_{+}-t^2_{+})+B(\bar{w}^2_{-}-t^2_{-})]\bar{w}_{+}\ge 0, \label{LHSwdbarpp}\\
\text{LHS($\bar{w}_{-}$)}=\dis -\bar{w}''_{-}-\frac{\bar{w}'_{-}}{r}+\frac{n_-^2}{r^2}\bar{w}_{-}+[A_{-}(\bar{w}^2_{-}-t^2_{-})+B(\bar{w}^2_{+}-t^2_{+})]\bar{w}_{-}\ge 0, \label{LHSwdbarmp}
\end{align}
for all $r\ge R$, and $f_{+}(R)\le\bar{w}_{+}(R), f_{-}(R)\le\bar{w}_{-}(R)$.

Using Maple software, we expand \eqref{LHSwdbarpp} and \eqref{LHSwdbarmp}, which is a polynomial in even power of $r^{-1}$, as in \eqref{Lwbarpsum} and 
\begin{align}
\text{LHS($\bar{w}_{-}$)}=\dis \sum^9_{k=1}M^{-}_{2k}\left({R\over r}\right)^{2k}, \label{Lwbarmsum}
\end{align}
respectively, where $M^{\pm}_{2k}=M^{\pm}_{2k}(A_{\pm}, B, R, a_{\pm}, b_{\pm}, \bar{c}_{\pm})$ is a polynomial in its arguments. Similarly, we choose $a_{\pm}, b_{\pm}$ as in \eqref{apm} and \eqref{bpm} separately so that $M^{\pm}_{2}$  may be zero, arriving at  the same formulas in Step 1. Then, repeat the same procedure as in Step 1, we get the exact forms for $M^{\pm}_{6}, M^{\pm}_{12}$ and $M^{\pm}_{18}$ as the same as in Step 1 but with $\underline{c}_{-}$ replaced by $\bar{c}_{-}$, and $M^{\pm}_{8}, M^{\pm}_{10}$ and $M^{\pm}_{14}$ as the same as in Step 1.

As $M^{\pm}_6$ is still the leading order term, we choose $\bar{c}_{\pm}$ in order that it gives the correct sign, and so that it dominates the other terms for all $r\ge R$.  To do this,  we take $[\hat{c}_{+}, \hat{c}_{-}]$ to be the unique solutions of the linear system:
$$
\left\{
\begin{gathered}
A_{+}t_{+}\hat{c}_{+}+Bt_{-}\hat{c}_{-}=1,\\
Bt_{+}\hat{c}_{+}+A_{-}t_{-}\hat{c}_{-}=1,
\end{gathered}
\right.
$$
that is:
$$
\hat{c}_{+}=\dis\frac{A_{-}-B}{(A_{+}A_{-}-B^2)t_{+}}>0,\ \ \ \ \ \ \hat{c}_{-}=\dis\frac{A_{+}-B}{(A_{+}A_{-}-B^2)t_{-}}>0.
$$
Let $\bar{c}_{\pm}=\delta\hat{c}_{\pm}$ with $0<\delta<1$ to be chosen later, hence $\bar{c}_{\pm}>0$ 
and 
\begin{equation}\label{M6pp}
M^{\pm}_{6}=2\delta t_{\pm}>0.
\end{equation}
By choosing $R$ large enough, we can also get \eqref{4bound}, \eqref{4sumbound} and
\begin{align}\nnn
\dis |M^+_{12}|&=|\delta^{2}(A_{+}\hat{c}^2_{+}+B\hat{c}^2_{-})t_{+}+2\delta^{2}\hat{c}_{+}|\nnn\\
                      &=\delta^{2}|(A_{+}\hat{c}^2_{+}+B\hat{c}^2_{-})t_{+}+2\hat{c}_{+}|\nnn\\
                      &\le c_{5}(A_{\pm},B, t_{\pm})\delta^2\nnn\\
                      &\le\dis {1\over 5}M^+_6={2\over 5} \delta t_+ , \label{m12pp} 
\end{align}
which implies that $\delta\le {2t_{+} \over 5c_5}$.

Meanwhile, 
\begin{align}\nnn
\dis |M^-_{12}|&=|\delta^{2}(A_{-}\hat{c}^2_{-}+B\hat{c}^2_{+})t_{-}+2\delta^{2}\hat{c}_{-}|\nnn\\
                  &=\delta^{2}|(A_{-}\hat{c}^2_{-}+B\hat{c}^2_{+})t_{-}+2\hat{c}_{-}|\nnn\\
                 &\le c_{6}(A_{\pm},B, t_{\pm})\delta^2\nnn\\
                 &\le\dis {1\over 5}M^-_6={2\over 5} \delta t_-, \label{m12mp} 
\end{align}
which implies that $\delta\le {2t_{-} \over 5c_6}$. Therefore, if we choose $\delta\le\min\{ {2t_{+} \over 5c_5}, {2t_{-} \over 5c_6}\}$, it follows that $|M^{\pm}_{12}|\le {1\over 5}|M^{\pm}_{6}|$. And for $M^{\pm}_{18}$, we have
\begin{align}\nnn
\dis |M^{\pm}_{18}|=\delta^{3}(A_{\pm}\hat{c}^2_{\pm}+B\hat{c}^2_{\mp})\hat{c}_{\pm}=O(\delta^3),
\end{align}
uniformly in $A_\pm,B$, and thus it can still be controlled by $M^{\pm}_{6}$.  In particular, taking $\delta$ smaller if necessary, we have
$|M_{18}^\pm|\le \frac15|M_6^\pm|$.

By \eqref{Lwbarpsum}, \eqref{Lwbarmsum} and \eqref{M6pp}, \eqref{4bound}-\eqref{4sumbound} and \eqref{M12}, we deduce that
\begin{align}\nnn
\dis \sum^{9}_{k=1} M^{\pm}_{2k}\left( {R\over r}\right)^{2k}&\ge-{1\over 5}M^{\pm}_{6}\left({R\over r}\right)^{6}-{1\over 5}M^{\pm}_{6}\left({R\over r}\right)^{6}+M^{\pm}_{6}\left({R\over r}\right)^{6}\nnn\\
&=\dis {3\over 5}M^{\pm}_{6}\left({R\over r}\right)^{6}={6\over 5}\delta t_{\pm}\left({R\over r}\right)^{6}>0,
\end{align}
for all $r\ge R$.

Now we consider $\bar{w}_{\pm}$ at $r=R$. Combine \eqref{wbarp}, \eqref{wbarm} and \eqref{apm}, we have $\bar{w}_{\pm}=t_{\pm}+\bar{c}_{\pm}-O(R^{-2})$. Since $f_{\pm}\to t_{\pm}$ as $r\to\infty$ and $\bar{c}_{\pm}>0$, we obtain that $\bar{w}_{\pm}(R)>f_{\pm}(R)$ for $R\ge R_{0}$ sufficiently large. This finishes Step 5.

{\bf Step 6:} We will apply Lemma~\ref{generalcomplem} to show that $\bar{w}_{\pm}(r)$ is supsolution to \eqref{eqns}. Let $u=f_{+}-\bar{w}_{+}$, $v=f_{-}-\bar{w}_{-}$, we have that
\begin{align*}
L_+ u
                      &=[A_{+}(t^2_{+}-f^2_{+})+B(t^2_{-}-f^2_{-})]f_{+}+[A_{+}(\bar{w}^2_{+}-t^2_{+})+B(\bar{w}^2_{-}-t^2_{-})]\bar{w}_{+}\\
                      &=-A_{+}(f^3_{+}-\bar{w}^3_{+})+A_{+}t^2_{+}(f_{+}-\bar{w}_{+})+Bt^2_{-}(f_{+}-\bar{w}_{+})+B(\bar{w}^2_{-}\bar{w}_{+}-f^2_{-}f_{+})\\
                      &=[-A_{+}(f^2_{+}+f_{+}\bar{w}_{+}+\bar{w}^2_{+})+A_{+}t^2_{+}+Bt^2_{-}-Bf^2_{-}]u-B(\bar{w}_{-}+f_{-})\bar{w}_{+}v\\
                      &=[A_{+}t^2_{+}+B(t^2_{-}-f^2_{-})-A_{+}(f^2_{+}+f_{+}\bar{w}_{+}+\bar{w}^2_{+})]u-B(\bar{w}_{-}+f_{-})\bar{w}_{+}v ,
\end{align*}
and 
\begin{align*}
                   L_-v   &=[A_{-}(t^2_{-}-f^2_{-})+B(t^2_{+}-f^2_{+})]f_{-}+[A_{-}(\bar{w}^2_{-}-t^2_{-})+B(\bar{w}^2_{+}-t^2_{+})]\bar{w}_{-}\\
                      &=-A_{-}(f^3_{-}-\bar{w}^3_{-})+A_{-}t^2_{-}(f_{-}-\bar{w}_{-})+Bt^2_{+}(f_{-}-\bar{w}_{-})+B(\bar{w}^2_{+}\bar{w}_{-}-f^2_{+}f_{-})\\
                      &=[-A_{-}(f^2_{-}+f_{-}\bar{w}_{-}+\bar{w}^2_{-})+A_{-}t^2_{-}+Bt^2_{+}-Bf^2_{+}]v-B(\bar{w}_{+}+f_{+})\bar{w}_{-}u\\
                      &=-B(\bar{w}_{+}+f_{+})\bar{w}_{-}u+[-A_{-}(f^2_{-}+f_{-}\bar{w}_{-}+\bar{w}^2_{-})+A_{-}t^2_{-}+B(t^2_{+}-f^2_{+})]v ,
\end{align*}
thus by \eqref{LHSwdbarpp}-\eqref{LHSwdbarmp} we have that
\begin{equation*}
L_+ u+[A_{+}(f^2_{+}+f_{+}\bar{w}_{+}+\bar{w}^2_{+})+B(f^2_{-}-t^2_{-})-A_{+}t^2_{+}]u+B(\bar{w}_{-}+f_{-})\bar{w}_{+}v\le 0,
\end{equation*}
\begin{equation*}
L_- v+B(\bar{w}_{+}+f_{+})\bar{w}_{-}u+[A_{-}(f^2_{-}+f_{-}\bar{w}_{-}+\bar{w}^2_{-})+B(f^2_{+}-t^2_{+})-A_{-}t^2_{-}]v\le 0,
\end{equation*}
and $u(R)=f_{+}(R)-\bar{w}_{+}(R)\le 0$, $v(R)=f_{-}(R)-\bar{w}_{-}(R)\le 0$. Therefore, we have the exact forms of the inequality system as in the part (A) of Lemma~\ref{generalcomplem} with 
\begin{equation}
\mathbb{A}(r)=A_{+}(f^2_{+}+f_{+}\bar{w}_{+}+\bar{w}^2_{+})+B(f^2_{-}-t^2_{-})-A_{+}t^2_{+}, 
\end{equation}
\begin{equation}
\mathbb{B}(r)=-B(\bar{w}_{-}+f_{-})\bar{w}_{+},\ \ \ \ \ \mathbb{C}(r)=-B(\bar{w}_{+}+f_{+})\bar{w}_{-},
\end{equation}
\begin{equation}
\mathbb{D}(r)=A_{-}(f^2_{-}+f_{-}\bar{w}_{-}+\bar{w}^2_{-})+B(f^2_{+}-t^2_{+})-A_{-}t^2_{-}.
\end{equation}
 
With the uniform convergence of $f_{\pm}(r)$ as $r\to\infty$, we obtain that  
\begin{equation*}
\mathbb{A}(r)\to 2A_{+}t^2_{+},\ \ \ \ \ \  \ \ \mathbb{B}(r)\to -2Bt_{+}t_{-}, 
\end{equation*}
\begin{equation*}
\mathbb{C}(r)\to -2Bt_{+}t_{-},\ \ \  \mathbb{D}(r)\to 2A_{-}t^2_{-}
\end{equation*}
and
\begin{multline*}
4\mathbb{AD}-(\mathbb{B+C})^2 \to 4\cdot 2A_{+}t^2_{+}\cdot 2A_{-}t^2_{-}-(4Bt_{+}t_{-})^2\\
=16t^2_{+}t^2_{-}(A_{+}A_{-}-B^{2})>0\ \ \ \ \ \ \text{for}\ \ \ R\ \ \text{large\ \ enough}.
\end{multline*}
Hence we can simply apply part (A) of Lemma~\ref{generalcomplem} to get that $u(r)\le0$, $v(r)\le0$ in $[R, \infty)$, i.e. $f_{\pm}(r)\le \bar{w}_{\pm}(r)$ in $[R, \infty)$.

{\bf Step 7:} We repeat the same procedure as in Step 5 and Step 6, but with a pair of sub-solutions $\underline{w}_{\pm}$ to \eqref{eqns} instead of sup-solutions $\bar{w}_{\pm}$. Let $\underline{w}_{+}$ be defined as in  the following
\begin{equation}\label{wunderp}
\underline{w}_{+}=\dis t_{+}+a_{+}{R^{2}\over r^2}+b_{+}{R^{4}\over r^4}+\underline{c}_{+}{R^{6}\over r^6},
\end{equation}
and $\underline{w}_{-}$ defined as in \eqref{wunderm}, with $a_{\pm}, b_{\pm}, \underline{c}_{\pm}$ and $R$ are to be chosen so that 
\begin{align}
\text{LHS($\underline{w}_{\pm}$)}=\dis -\underline{w}''_{\pm}-\frac{\underline{w}'_{\pm}}{r}+\frac{n^2_\pm}{r^2}\underline{w}_{\pm}+[A_{\pm}(\underline{w}^2_{\pm}-t^2_{\pm})+B(\underline{w}^2_{\mp}-t^2_{\mp})]\underline{w}_{\pm}\le 0, \label{LHSwdundermm}
\end{align}
for all $r\ge R$, and $\underline{w}_{\pm}(R)\le f_{\pm}(R)$. And we choose $\underline{c}_{\pm}=-\delta\hat{c}_{\pm}$ with the same $\delta$ as in Step 5 so that $M^{\pm}_{6}$ is still the leading terms in \eqref{Lwbarpsum} and \eqref{Lwbarmsum} with which $\bar{w}_{\pm}$ replaced by $\underline{w}_{\pm}$. Therefore, we can simply get that
\begin{align}
\text{LHS}(\underline{w}_{\pm})&\le \dis M^{\pm}_{6}\left({R\over r}\right)^{6}+{1\over5}|M^{\pm}_{6}|\left({R\over r}\right)^{6}+{1\over5}|M^{\pm}_{6}|\left({R\over r}\right)^{6}\nnn\\
                                                        &=\dis\left( M^{\pm}_{6}-{1\over 5}M^{\pm}_{6}-{1\over 5}M^{\pm}_{6}\right)\left({R\over r}\right)^{6} \nnn\\
                                                        &= \dis {3\over5}M^{\pm}_{6}\left({R\over r}\right)^{6} = -{6\over 5}\delta t_{\pm}\left({R\over r}\right)^{6} <0.
\end{align}

Next, with $u=f_{+}-\underline{w}_{+}$, $v=f_{-}-\underline{w}_{-}$, we repeat the similar process as in Step 6 and apply part (A) of Lemma~\ref{generalcomplem} again, we can obtain that $\underline{w}_{\pm}(r)\le f_{\pm}(r)$ in $[R, \infty)$.

{\bf Step 8:} Combing all the results from above steps, we have that
\begin{equation}
\underline{w}_{\pm}(r)\le f_{\pm}(r)\le \bar{w}_{\pm}(r)\ \ \ \ \ \text{in}\ \ \ [R,\infty).
\end{equation}
for both cases of $B$.

From the sub- and super-solution argument, we have that
$$
\dis\left|f_{\pm}-\left(t_{\pm}+\frac{\tilde{a}_{\pm}}{r^2}+\frac{\tilde{b}_{\pm}}{r^4}\right)\right|\le\dis \frac{C_{1}}{r^6},
$$          
with $\tilde{a}_{\pm}=a_{\pm}R^{2}, \tilde{b}_{\pm}=b_{\pm}R^4$, $a_{\pm}$ and $b_{\pm}$ defined as same as in \eqref{apm} and \eqref{bpm} respectively, 
and $C_{1}=\min\{c_{\pm}R^6\}$ with $c_{+}=\min\{|\bar{c}_{+}|, |\underline{c}_{+}|\}$, $c_{-}=\min\{|\underline{c}_{-}|, |\bar{c}_{-}|\}$.

Next we want to show that $|f'_{\pm}(r)+\frac{2\tilde{a}_{\pm}}{r^3}|\le\frac{C_2}{r^5}$ for some constant $C_{2}$. Following the idea in \cite{CEQ}, let $W_{\pm}(r)=f_{\pm}-\left(t_{\pm}+\frac{\tilde{a}_{\pm}}{r^2}+\frac{\tilde{b}_{\pm}}{r^4}\right)=f_{\pm}-w_{\pm}+\frac{c_{\pm}}{r^6}$(for convenience, we drop bar and underline of $w_{\pm}$ and $c_{\pm}$ as in the formulas of $w_{\pm}$ shown in previous steps), hence $W_{\pm}(r)=O(r^{-6})$. Therefore, we deduce that
\begin{align}
&\dis -W''_{+}-\frac{1}{r}W'_{+}+\frac{n^2_+}{r^2}W_{+}\nnn\\
&\quad=\dis -\left(f''_{+}-w''_{+}+\frac{{c}_{+}}{r^8}\right)-\frac{1}{r}\left(f'_{+}-w'_{+}+\frac{{c}_{+}}{r^7}\right)+\frac{n^2_+}{r^2}\left(f_{+}-w_{+}+\frac{c_{+}}{r^6}\right)\nnn\\
&\quad\dis=-f''_{+}-\frac{1}{r}f'_{+}+\frac{n^2_+}{r^2}f_{+}-\left(-w''_{+}-\frac{1}{r}w'_{+}+\frac{n^2_+}{r^2}w_{+}\right)+O(r^{-8})\nnn\\
&\quad=\dis[A_{+}(t^2_{+}-f^2_{+})+B(t^2_{-}-f^2_{-})]f_{+}+[A_{+}(w^2_{+}-t^2_{+})+B(w^2_{-}-t^2_{-})]w_{+}+O(r^{-8})\nnn\\
&\quad=\dis-\mathbb{A}W_{+}-\mathbb{B}W_{-}+\mathbb{A}\frac{c_{+}}{r^6}+\mathbb{B}\frac{c_{-}}{r^6}+O(r^{-8}),\nnn
\end{align}
i.e.
$$
\dis -W''_{+}-\frac{1}{r}W'_{+}+\frac{n^2_+}{r^2}W_{+}+\mathbb{A}W_{+}+\mathbb{B}W_{-}= O(r^{-6})\ \ \ \text{for}\ \ \ r\ge R.
$$

Similarly, we have
$$
\dis -W''_{-}-\frac{1}{r}W'_{-}+\frac{n^2_-}{r^2}W_{-}+\mathbb{C}W_{+}+\mathbb{D}W_{-}= O(r^{-6})\ \ \ \text{for}\ \ \ r\ge R,
$$       
with $\mathbb{A, B}, \mathbb{C}, \mathbb{D}$ defined as same as in the above process of super-sub solution in different cases. Therefore, $\dis\frac{1}{r}(rW'_{\pm})'=O(r^{-6})$. On the other hand, notice that for each $k>1$ there exists a point $r_{k}\in (k, 2k)$ such that $W'_{\pm}(r_{k})=(W_{\pm}(2k)-W_{\pm}(k))/k=O(k^{-7})=O(r^{-7}_{k})$, then we have that
$$
|-rW'_{\pm}(r)+r_{k}W'_{\pm}(r_{k})|=\left|\int^{r_{k}}_{r}\frac{1}{r}(rW'_{\pm})'rdr\right|
\le C_2\int^{r_{k}}_{r}r^{-5}dr\le \frac{4C_{2}}{r^4},
$$  
with constant $C_2>0$.  
Together with $r_{k}W'_{\pm}(r_{k})=O(r^{-6}_{k})\to0$ as $k\to\infty$, we have that $|rW'_{\pm}(r)|\le\dis\frac{C_{2}}{r^4}$ for $r\ge R$ sufficiently large, i.e. $|W'_{\pm}(r)|\le\dis\frac{C_{2}}{r^5}$ for $r\ge R$ sufficiently large. Hence, we deduce that $|f'_{\pm}(r)+\frac{2\tilde{a}_{\pm}}{r^3}|\le\dis\frac{C_{2}}{r^5}$ for all $r\ge R$.
\end{proof}

\section{Monotonicity}

Next we will present the proof on the monotonicity of the radial solutions. First, we define the spaces as in \cite{ABM1} : 
$$
X_{0}:=H^{1}((0,\infty);rdr),
$$
$$
\dis X_{n}:=\left\{u\in X_{0}: \int^\infty_{0}\frac{u^2}{r^2}rdr<\infty\right\},\ \ \  \|u\|^2_{X_{n}}=\int^\infty_{0}\left[(u')^{2}+u^{2}+\frac{n^2}{r^2}u^2\right]rdr .
$$
Of course the spaces $X_{n}, n\neq0$, are all equivalent, but we define them this way for notational convenience. It is not difficult to show (see \cite{AB}) that for $|n|\ge1$, $X_{n}$ is continuously embedded in the space of continuous functions on $(0,\infty)$ which vanish at $r=0$ and $r\to\infty$, and that $\mathcal{C}^{\infty}_{0}((0,\infty))$ is dense in $X_{1}$. It is possible to define a global variational framework for the equivariant problems in affine spaces based on $X_{n_{+}}, X_{n_{-}}$ to prove existence of solutions. The energy is the same as in \eqref{radenergy}, except it must be "renormalized" to prevent divergence of the $\frac{n^2_{n_{\pm}}}{r^2}$ term at infinity. Here we are only interested in the (formal) second variation of this renormalized energy, 
\be\label{radvariation}
\left.
\begin{array}{l}
\dis D^{2}E_{n_{+},n_{-}}(f_{+},f_{-})[u_{+},u_{-}]:=\int^{\infty}_{0}\left[(u'_{+})^{2}+(u'_{-})^{2}+\frac{n^2_{+}}{r^2}u^2_{+}+\frac{n^2_{-}}{r^2}u^2_{-} \right]rdr\\
\ \ \ \ \ \ \ \ \dis +\int^{\infty}_{0}\left\{ [A_{+}(f^2_{+}-t^2_{+})+B(f^2_{-}-t^2_{-})]u^2_{+}+ [A_{-}(f^2_{-}-t^2_{-})+B(f^2_{+}-t^2_{+})]u^2_{-}\right\}rdr\\
\ \ \ \ \ \ \ \ \dis +\int^{\infty}_{0} 2(A_{+}f^2_{+}u^2_{+}+A_{-}f^2_{-}u^2_{-}+2Bf_{+}f_{-}u_{+}u_{-}) rdr,
\end{array}
\right.
\ee
defined for $[u_{+},u_{-}]\in X_{n_{+}}\times X_{n_{-}}$. 

We have the following fact about radial solutions:
\begin{lemma}
For any $n_{\pm}\in \mathbb{ Z}$, if $[f_{+},\ f_{-}]$ is the (unique) radial solution of \eqref{radeq}, 
$$
D^{2}E_{n_{+},n_{-}}(f_{+},f_{-})[u_{+},u_{-}]>0 \ \ \   \text{for\ \  all\ \  } [u_{+},u_{-}]\in X_{n_{+}}\times X_{n_{-}}\setminus \{[0,0]\} .
$$
\end{lemma}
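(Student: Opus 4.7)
The natural approach is a Picone-type substitution exploiting that $f_\pm>0$ on $(0,\infty)$ by Proposition~\ref{lemunique}. The plan is to write $u_\pm=f_\pm\phi_\pm$ and reduce $D^2E$ to a manifestly nonnegative quantity by invoking the equations \eqref{radeq} satisfied by $f_\pm$. Since the hypothesis on test functions sits in $X_{n_+}\times X_{n_-}$, I would first work with $(u_+,u_-)$ smooth and compactly supported in $(0,\infty)$ so that all boundary terms in integrations by parts vanish trivially, then extend by the density statements recalled from \cite{AB} (with an extra approximation by compactly supported functions to cover the $n_\pm=0$ case).

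The computation is as follows. Multiplying the radial equation for $f_+$ by $f_+\phi_+^2$, integrating against $r\,dr$, and integrating by parts in the Laplacian term using $(-\Delta_r f_+)f_+\phi_+^2\,r = -(rf_+')'f_+\phi_+^2$ yields
\begin{equation*}
\int_0^\infty\!\!\Big[(f_+')^2\phi_+^2+2f_+f_+'\phi_+\phi_+'+\tfrac{n_+^2}{r^2}f_+^2\phi_+^2+\big(A_+(f_+^2-t_+^2)+B(f_-^2-t_-^2)\big)f_+^2\phi_+^2\Big]r\,dr=0.
\end{equation*}
Since $(u_+')^2=(f_+')^2\phi_+^2+2f_+f_+'\phi_+\phi_+'+f_+^2(\phi_+')^2$, this identity rewrites as
\begin{equation*}
\int_0^\infty\!\!\Big[(u_+')^2+\tfrac{n_+^2}{r^2}u_+^2+\big(A_+(f_+^2-t_+^2)+B(f_-^2-t_-^2)\big)u_+^2\Big]r\,dr=\int_0^\infty f_+^2(\phi_+')^2\,r\,dr,
\end{equation*}
with the analogous identity for $u_-$. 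Adding the two and reintroducing the Hessian-of-potential piece of \eqref{radvariation} gives the key representation
\begin{equation*}
D^2E_{n_+,n_-}(f_+,f_-)[u_+,u_-]=\int_0^\infty\!\!\big[f_+^2(\phi_+')^2+f_-^2(\phi_-')^2\big]r\,dr+2\!\int_0^\infty\!\! Q\big(f_+^2\phi_+,f_-^2\phi_-\big)r\,dr,
\end{equation*}
where $Q(x,y)=A_+x^2+2Bxy+A_-y^2$.

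By hypothesis (H), the matrix $\left(\begin{smallmatrix}A_+ & B \\ B & A_-\end{smallmatrix}\right)$ is positive definite with smallest eigenvalue $\lambda_s>0$, so $Q(f_+^2\phi_+,f_-^2\phi_-)\ge \lambda_s(f_+^4\phi_+^2+f_-^4\phi_-^2)\ge 0$. Hence $D^2E\ge 0$. For strict positivity, equality forces both integrands to vanish a.e.: from the quadratic form being zero together with $f_\pm>0$ we deduce $\phi_\pm\equiv 0$, whence $u_\pm\equiv 0$. The main obstacle is not algebraic but technical: verifying that the integration by parts has vanishing boundary terms for every admissible $(u_+,u_-)$, especially handling the mild singularities of $\phi_\pm=u_\pm/f_\pm$ at $r=0$ when $n_\pm\ge 1$ (where $f_\pm\sim r^{n_\pm}$ by \eqref{fnear0}) and controlling the tail at $r=\infty$ using the decay $f_\pm'=O(r^{-3})$ from Theorem~\ref{asymptotics} together with the weighted $H^1$ definition of $X_{n_\pm}$. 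This is what the density arguments are meant to bypass, reducing to the clean case of compactly supported test functions in $(0,\infty)$ where the identity is straightforward.
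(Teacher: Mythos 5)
Your proof is correct and follows essentially the same route as the paper: the test function $f_\pm\phi_\pm^2=u_\pm^2/f_\pm$ in the weak form of \eqref{radeq} is exactly the Picone/Brezis--Oswald substitution used there (following \cite{ABG}), the resulting identity $\int[(u_\pm')^2+\tfrac{n_\pm^2}{r^2}u_\pm^2+(\cdots)u_\pm^2]\,r\,dr=\int f_\pm^2(\phi_\pm')^2\,r\,dr$ is the paper's key identity, and the conclusion via positive definiteness of $\bigl(\begin{smallmatrix}A_+&B\\ B&A_-\end{smallmatrix}\bigr)$ applied to $(f_+u_+,f_-u_-)$ together with $f_\pm>0$ is identical. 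The density reduction to compactly supported test functions (with the separate treatment of $n_\pm=0$) is also how the paper handles the boundary terms.
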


In other words, the radial solutions are non-degenerate local minimizers of the renormalized energy. An analogous statement for the Ginzburg-Landau equation with magnetic field was derived in \cite{ABG}, and this observation then became the main step in the proof of uniqueness of equivariant solutions proved there.
\begin{proof}
We follow \cite{ABG}, and note that
$$
\dis f^2(r)\left[\left(\frac{u(r)}{f(r)}\right)'\right]^{2}=(u')^{2}-2\frac{uu'f'}{f}+u^{2}\frac{(f')^2}{f^2}=(u')^2-\left(\frac{u^2}{f}\right)'f'\ .
$$
Let $u_{\pm}\in\mathcal{C}^{\infty}_{0}((0,\infty))$ (if $n_{\pm}=0$, take $u_{\pm}\in\mathcal{C}^{\infty}_{0}([0,\infty))$ instead). Then $[\frac{u^2_{+}}{f_{+}},\frac{u^2_{-}}{f_{-}}]$ gives an admissible test function in the weak form of  \eqref{radeq}, 
$$
\left.
\begin{array}{l}
\dis 0=DE_{n_{+},n_{-}}(f_{+},f_{-})\left[\frac{u^2_{+}}{f_{+}},\frac{u^2_{-}}{f_{-}}\right]\\
\ \ \dis =\int^{\infty}_{0}\left[ f'_{+}\left(\frac{u^2_{+}}{f_{+}}\right)'+ f'_{-}\left(\frac{u^2_{-}}{f_{-}}\right)'+\frac{n^2_{+}}{r^2}f_{+}\frac{u^2_{+}}{f_{+}}+\frac{n^2_{-}}{r^2}f_{-}\frac{u^2_{-}}{f_{-}}\right]rdr\\
\ \ \ \ \ \ \ \dis +\int^{\infty}_{0}\bigg\{A_{+}(f^2_{+}-t^2_{+})f_{+}\frac{u^2_{+}}{f_{+}}+A_{-}(f^2_{-}-t^2_{-})f_{+}\frac{u^2_{-}}{f_{-}}\\
\ \ \ \ \ \ \ \dis +B\left[(f^2_{+}-t^2_{+})f_{-}\frac{u^2_{-}}{f_{-}}+(f^2_{-}-t^2_{-})f_{+}\frac{u^2_{+}}{f_{+}}\right] \bigg\}rdr\\
\ \ \dis=\int^{\infty}_{0}\Bigg\{(u'_{+})^{2}-f^2_{+}\left[\left(\frac{u^2_{+}}{f_{+}}\right)'\right]^{2}+(u'_{-})^{2}-f^2_{-}\left[\left(\frac{u^2_{-}}{f_{-}}\right)'\right]^{2}+\frac{n^2_{+}}{r^2}u^2_{+}+\frac{n^2_{-}}{r^2}u^2_{-} \\
\ \ \ \ \ \ \ \dis +A_{+}(f^2_{+}-t^2_{+})u^2_{+}+A_{-}(f^2_{-}-t^2_{-})u^2_{-}\\
\ \ \ \ \ \ \ \dis+B\left[(f^2_{+}-t^2_{+})u^2_{-}+(f^2_{-}-t^2_{-})u^2_{+}\right]\Bigg\}rdr\ .
\end{array}
\right.
$$
After the regrouping, we get the following useful identity:
\begin{multline*}
\dis \int^{\infty}_{0}\left\{ (u'_+)^{2}+(u'_-)^{2}+\frac{n^2_+}{r^2}u^2_{+}+\frac{n^2_-}{r^2}u^2_{-}\right.\\
\dis +A_{+}(f^2_{+}-t^2_{+})u^2_{+}+A_{-}(f^2_{-}-t^2_{-})u^2_{-}+B\left.\left[(f^2_{+}-t^2_{+})u^2_{-}+(f^2_{-}-t^2_{-})u^2_{+}\right]\right\}rdr\\
\dis =\int^{\infty}_{0}\left\{f^2_{+}\left[\left(\frac{u_{+}}{f_{+}}\right)'\right]^{2}+f^2_{-}\left[\left(\frac{u_{-}}{f_{-}}\right)'\right]^{2} \right\}rdr\ge 0.
\end{multline*}
Comparing the formula of the second variation $D^{2}E_{n_{+},n_{-}}(f_{+},f_{-})$ and the above identity, and using the fact that $\lambda_{s}>0$ is the smallest eigenvalue of the matrix $\left[\begin{array}{cc}A_+ & B \\B & A_-\end{array}\right]$, we obtain that
\begin{align}\nnn
&D^{2}E_{n_{+},n_{-}}(f_{+},f_{-})[u_{+},u_{-}] \nnn\\
&\qquad \dis =\int^{\infty}_{0}\left\{f^2_{+}\left[\left(\frac{u'_{+}}{f_+}\right)'\right]^{2}+f^2_{-}\left[\left(\frac{u'_{-}}{f_-}\right)'\right]^{2}
+2(A_{+}f^2_{+}u^2_{+}+A_{-}f^2_{-}u^2_{-}+2Bf_{+}f_{-}u_{+}u_{-})\right\}rdr\nnn\\
&\qquad \ge \dis 2\int^{\infty}_{0}(A_{+}f^2_{+}u^2_{+}+A_{-}f^2_{-}u^2_{-}+2Bf_{+}f_{-}u_{+}u_{-})rdr\nnn\\
&\qquad \ge \dis 2\lambda_{s}\int^{\infty}_{0}(f^2_{+}u^2_{+}+f^2_{-}u^2_{-})rdr\nnn \\
&\qquad \ge0\label{2ndvarinfty}, 
\end{align}
valid for all $u_{\pm}\in\mathcal{C}^{\infty}_{0}((0,\infty))$ (or, $u_{\pm}\in\mathcal{C}^{\infty}_{0}([0,\infty))$ if the respective $n_{\pm}=0$). The case of general $u_{\pm}\in X_{1}$ (or $X_{0}$, in case one of $n_{\pm}=0$) follows by density. It is clear that $D^{2}E_{n_{+},n_{-}}(f_{+},f_{-})\ge 0$. If it were zero for some $[u_{+},u_{-}]$, then we would have $f_{+}u_{+}=-f_{-}u_{-}=0$ almost everywhere. Since $f_{\pm}(r)>0$ for $r>0$, we conclude that $D^{2}E_{n_{+},n_{-}}(f_{+},f_{-})>0$ as claimed.
\end{proof}

We are now ready for the proof of Theorem~\ref{monotone} from the introduction.

%
%

\begin{proof}[Proof of Theorem~\ref{monotone}]
Parts (i) and (ii) follow from the nondegeneracy of $D^2E_{n_+,n_-}$ above; part (iii) will be proven in a different manner, using the asymptotics and a compactness argument.

Let $u_{\pm}(r):=f'_{\pm}(r)$. Differentiating \eqref{radeq}, we get
\begin{multline*}
\dis -f'''_{\pm}-\frac1r f''_{\pm}+\frac{n^2_{\pm}+1}{r^2}f'_{\pm}+[A_{\pm}(f^2_{\pm}-t^2_{\pm})+B(f^2_{\mp}-t^2_{\mp})]f'_{\pm}\\
\dis +2A_{\pm}f^2_{\pm}f'_{\pm}+2Bf_{\pm}f_{\mp}f'_{\mp}-\frac{2n^2_{\pm}}{r^3}f_{\pm}=0,
\end{multline*}
i.e. 
\begin{multline}\label{upmeq}
\dis -u''_{\pm}-\frac1r u'_{\pm}+\frac{n^2_{\pm}+1}{r^2}u_{\pm}+[A_{\pm}(f^2_{\pm}-t^2_{\pm})+B(f^2_{\mp}-t^2_{\mp})]u_{\pm}\\
\dis +2A_{\pm}f^2_{\pm}u_{\pm}+2Bf_{\pm}f_{\mp}u_{\mp}-\frac{2n^2_{\pm}}{r^3}f_{\pm}=0.
\end{multline}

Now define $v_{\pm}=\min\{0,u_{\pm}\}\le0, w_{\pm}=\max\{0, u_{\pm}\}\ge0$, then $u_{\pm}=v_{\pm}+w_{\pm}$. 
For part (i), assume $B<0$.  We multiply the respective equation in \eqref{upmeq} by $v_{\pm}$, use the facts $v_{+}w_{+}=0$ and $v_{-}w_{-}=0$ and integrate by parts. By \eqref{fpstive} and \eqref{fnear0}, if $n_{\pm}\ge1$, $u_{\pm}(r)=f'_{\pm}(r)>0$ in some neighborhood $r\in(0,\delta)$. Thus, in case $n_{\pm}\ge1$, $v_{\pm}$ is supported away from $r=0$. By the proof of Theorem~\ref{asymptotics} we may conclude that $v_{\pm}\in X_{n_{\pm}}$. Moreover, 
$$
\dis \int^\infty_{0}v_{\pm}\left(u''_{\pm}+\frac1r u'_{\pm}\right)rdr=-\int^\infty_{0}(v'_{\pm})^{2}rdr,
$$
with no boundary terms. In case $n_{\pm}=0$, we have $u_{\pm}\in X_{0}$ by the regularity of solutions, and $u_{\pm}(0)=f'_{\pm}(0)=0$. The integration by parts formula above again holds with no boundary terms in this case as well. Therefore, with all above facts, we have the following equations
\begin{multline*}
0=\dis\int^\infty_{0}\left\{(v'_{\pm})^{2}+\frac{n^2_{\pm}+1}{r^2}v^2_{\pm}+[A_{\pm}(f^2_{\pm}-t^2_{\pm})+B(f^2_{\mp}-t^2_{\mp})]v^2_{\pm}\right.\\
\dis \left.+2A_{\pm}f^2_{\pm}v^2_{\pm}+2Bf_{\pm}f_{\mp}u_{\mp}v_{\pm}-\frac{2n^2_{\pm}}{r^3}f_{\pm}v_{\pm}\right\}rdr\ .
\end{multline*}
Then, use the facts $u_{\pm}=v_{\pm}+w_{\pm}$ and add above two equations together
\begin{multline*}
0=\dis\int^\infty_{0}\left\{(v'_{+})^{2}+(v'_{-})^{2}+\frac{n^2_{+}}{r^2}v^2_{+}+\frac{n^2_{-}}{r^2}v^2_{-}\right.\\
\dis +[A_{+}(f^2_{+}-t^2_{+})+B(f^2_{-}-t^2_{-})]v^2_{+}+[A_{-}(f^2_{-}-t^2_{-})+B(f^2_{+}-t^2_{+})]v^2_{-}\\
\dis +2(A_{+}f^2_{+}v^2_{+}+A_{-}f^2_{-}v^2_{-})+4Bf_{+}f_{-}v_{+}v_{-}+2Bf_{+}f_{-}(w_{-}v_{+}+w_{+}v_{-})\\
\dis \left.-\frac{2n^2_{+}}{r^3}f_{+}v_{+}-\frac{2n^2_{-}}{r^3}f_{-}v_{-}\right\}rdr\ .
\end{multline*}
Compare with the formula of the second variation $D^{2}E_{n_{+},n_{-}}(f_{+},f_{-})$ in \eqref{2ndvarinfty}, we obtain 
\begin{align*}
0&=\dis D^{2}E_{n_{+},n_{-}}(f_{+},f_{-})[v_{+},v_{-}]+2B\int^\infty_{0}f_{+}f_{-}(w_{-}v_{+}+w_{+}v_{-})rdr\\
&\quad \dis+\int^\infty_{0}\frac{1}{r^2} (v^2_{+}+v^2_{-})rdr-2\int^\infty_{0}\frac{1}{r^3} (n^2_{+}f_{+}v_{+}+n^2_{-}f_{-}v_{-})rdr\ .
\end{align*}
Since $n^2_{+}f_{+}v_{+}$, $n^2_{-}f_{-}v_{-}$, $w_{-}v_{+}$ and $w_{+}v_{-}$ are all negative, together with $B<0$, we get that
\begin{align*}
0&\le D^{2}E_{n_{+},n_{-}}(f_{+},f_{-})[v_{+},v_{-}] \\
&=\dis- \int^\infty_{0}\frac{1}{r^{2}} (v^2_{+}+v^2_{-})rdr+2\int^\infty_{0}\frac{1}{r^3} (n^2_{+}f_{+}v_{+}+n^2_{-}f_{-}v_{-})rdr\\
&\hskip .8cm \dis -2B\int^\infty_{0}f_{+}f_{-}(w_{-}v_{+}+w_{+}v_{-})rdr\\
&\le \dis- \int^\infty_{0}\frac{1}{r^{2}} (v^2_{+}+v^2_{-})rdr\\
&<0,
\end{align*}
which is a contradiction unless $v_{\pm}\equiv0$, i.e. unless $f'_{\pm}(r)\ge0$ for all $r>0$. This proves (i).

To prove (ii), assume $B>0$ and $n_{+}\ge1$ and $n_{-}=0$. This time we multiply the equation of $u_+$ by $v_+$ and the equation of $u_-$ by $v_-$, and integrate by parts again. Just as in the previous case, $w_{-}\in X_{0}$, and the boundary term in the integration will all vanish. We obtain that:
\begin{multline*}
0=\dis \int^\infty_{0}\left\{(v'_{+})^{2}+\frac{n^2_{+}+1}{r^2}v^2_{+}+[A_{+}(f^2_{+}-t^2_{+})+B(f^2_{-}-t^2_{-})]v^2_{+}\right.\\
\dis \left.+2A_{+}f^2_{+}v^2_{+}+2Bf_{+}f_{-}u_{-}v_{+}-2\frac{n^2_{+}}{r^3}f_{+}v_{+}\right\}rdr\ ,
\end{multline*}
and
\begin{multline*}
0=\dis \int^\infty_{0}\left\{(w'_{-})^{2}+\frac{n^2_{-}+1}{r^2}w^2_{-}+[A_{-}(f^2_{-}-t^2_{-})+B(f^2_{+}-t^2_{+})]w^2_{-}\right.\\
\dis \left.+2A_{-}f^2_{-}w^2_{-}+2Bf_{-}f_{+}u_{+}w_{-}-2\frac{n^2_{-}}{r^3}f_{-}w_{-}\right\}rdr\ .
\end{multline*}
As in the first case, we add the above two equations and compare to the formula of the second variation $D^{2}E_{n_{+},n_{-}}(f_{+},f_{-})$ in \eqref{2ndvarinfty}, we get 
\begin{align*}
0&\le D^{2}E_{n_{+},n_{-}}(f_{+},f_{-})[v_{+},w_{-}]\\
&=\dis -\int^\infty_{0}\frac{1}{r^2}(v^2_{+}+w^2_{-})rdr-2B\int^\infty_{0}f_{+}f_{-}(v_{-}v_{+}+w_{+}w_{-})rdr\\
&\qquad \dis +2\int^\infty_{0}\left(\frac{n^2_{+}}{r^3}f_{+}v_{+}+\frac{n^2_{-}}{r^3}f_{-}w_{-}\right)rdr\\
&\le \dis -\int^{\infty}_{0} \frac{1}{r^2}(v^2_{+}+w^2_{-})rdr\\
&<0\ ,
\end{align*}
which is a contradiction unless $v_{+}\equiv0,\ w_{-}\equiv0$, i.e. $f'_{+}(r)\ge0$ and $f'_{-}(r)\le0$ for all $r>0$. This verifies statement (ii).

\medskip

It remains to prove part (iii) of Theorem~\ref{monotone}.
Denote $f_{\pm}(r; B)$ the solution of \eqref{radeqR} with coefficient $B$.
We recall that from \eqref{fnear0} of Theorem~\ref{lemunique}, $f_\pm(r;B)\sim r^{n_\pm}$ for $r$ near $0$, that is, each component vanishes exactly to degree $n_\pm$ at $r=0$,
\begin{equation}\label{f00}
f^{(m)}_\pm(0;B)=0, \quad m=0,\dots, n_\pm-1, \qquad 
   f^{(n_\pm)}_{\pm}(0;B)>0.
\end{equation}
When $B=0$ the system decouples, and each of $f_{0,\pm}=f_{\pm}(\cdot;0)$ solves a rescaled equation for the standard Ginzburg-Landau vortices,
\begin{equation}\label{GLrad}
\left.
\begin{gathered}
 \dis -f''_{0,+}-\frac{1}{r}f'_{0,+}+\frac{n^2_{+}}{r^2}f_{0,+}+
 A_{+}(f^2_{0,+}-t^2_{+})f_{0,+}=0, \ \ {\text{for}}\ r\in (0,\ \infty), \\
 \dis -f''_{0,-}-\frac{1}{r}f'_{0,-}+\frac{n^2_{-}}{r^2}f_{0,-}+ A_{-}(f^2_{0,-}-t^2_{-})f_{0,-}=0, \ \ {\text{for}}\ r\in (0,\ \infty), \\
 f_{0,\pm}(0)=0, \quad f_{0,\pm}(r)\to t_\pm \ \text{as $r\to\infty$},\quad
f_{0,\pm}(r)\ge 0\ \ \text{for\ \ all}\ \ r\in [0,\ \infty).
\end{gathered}
\right\}
\end{equation}
 For these solutions, it is well-known that (see \cite{HH})
\begin{equation}\label{glmonotone}
f'_{0,\pm}(r)>0\quad\text{ for $\forall r>0$}.
\end{equation}
Now choose $\tilde B_{0}>0$ for which the leading-order term in the asymptotic expansion (given in \eqref{apm}) $a_{\pm}=a_\pm(\tilde B_0)<0$.
From the formula \eqref{apm}, it is clear that $a_\pm=a_\pm(B)<0$ for all $B\in [0,\tilde B_0]$.
 By the asymptotic estimate \eqref{f'pmasybound}, there exists $R>0$, which may be chosen uniformly for $B\in [0,\tilde B_0]$, such that
\begin{equation}\label{f'(r;B)}
f'_{\pm}(r; B)>0\ \ \ \ \text{for}\ \ \ \forall r\ge R, \forall B\in[0, B_{0}],
\end{equation}
here $f'_{\pm}(r; B)$ depends on both $R$ and parameter $B$.

We next claim that $f_\pm(r;B)\to f_\pm(r;0)=f_{0,\pm}(r)$ in $\mathcal{C}^k([0,R])$ as $B\to 0$, for any $k\ge 0$.  Indeed, from Proposition~\ref{lemunique}
$\psi_{B,\pm}(x)=f_{\pm}(r;B)e^{i n_\pm\theta}$ is an entire solution to \eqref{eqns} in $\RR^2$.  As the right-hand sides of \eqref{eqns} are $L^\infty$ bounded, uniformly for $B\in [0,B_0]$, by standard elliptic estimates it follows that the solutions are bounded in $\mathcal{C}^{1,\alpha}_{loc}$ for any $\alpha\in (0,1)$.  Moreover, the right-hand sides of \eqref{eqns} are polynomials, and so by a bootstrap argument we obtain the stronger conclusion that $\psi_{B,\pm}$ are uniformly bounded in $\mathcal{C}^k_{loc}$ for any $k\ge 0$.  In particular, for any sequence $B_n\to 0$ there is a subsequence such that $\psi_{B_n,\pm}\to \psi_{0,\pm}$ in $\mathcal{C}^k$-norm, for any $k\ge 2$.  By the form of the solutions, we have $\psi_{0,\pm}=f_{0,\pm}(r) e^{in_\pm \theta}$, and
by passing to the limit in the equations we recognize $f_{0,\pm}=f_\pm(\cdot;0)$ as the (unique) solutions to the decoupled Ginzburg-Landau system \eqref{GLrad} above.  By the uniqueness of the limit, we conclude that $f_\pm(r;B)\to f_\pm(r;0)=f_{0,\pm}(r)$ in $\mathcal{C}^k([0,R])$ as $B\to 0$ (that is, along any sequence $B_n\to 0$), as claimed.

We now complete the proof of the statement (iii) by contradiction:  suppose there exist sequences $B_{k}\to 0$ and $r_{k}\in (0, \infty)$ such that $f'_{\pm}(r_{k}; B_{k})\le 0$. By \eqref{f'(r;B)}, we have that $0<r_{k}\le R$, so there must exist a subsequence $r_{k_{j}}$ and $r_0\in [0,\infty)$ with $r_{k_{j}}\to r_{0}$.  We claim that $r_0=0$.  Indeed, if not, by the $\mathcal{C}^1([0,R])$ convergence proven above,
$$
f'_{0,\pm}(r_0)=f'_{\pm}(r_{0}; 0)=\dis\lim_{j\to\infty}f'_{\pm}(r_{k_{j}}; B_{k_{j}})\le 0,
$$
which contradicts \eqref{glmonotone}.  Thus $r_0=0$.  

Since $f'_\pm(r_{k_j};B_{k_j})=0$, by applying Rolle's theorem $(n_\pm-1)$ times we may conclude that there exist points 
$\tilde r_{k_j}\in (0,r_{k_j})$ so that $f^{(n_\pm)}(\tilde r_{k_j})=0$.  By the $\mathcal{C}^{n_\pm}([0,R])$ convergence, we then conclude that
$$  f^{(n_\pm)}_{0,\pm}(0) = 
\lim_{j\to\infty} f^{(n_\pm)}_{\pm}(\tilde r_{k_j};B_{k_j}) =0, $$
which contradicts \eqref{f00} for $B=0$.
Thus we conclude that $f'(r;B)>0$ for all $r\in (0,\infty)$ and for all $B\in [0, B_0]$, as desired.
\end{proof}

\end{document}